\newcommand{\scrN}{\mathcal{N}}
\newcommand{\I}{\mathcal{I}}
\newcommand{\scrE}{\mathcal{E}}
\newcommand{\Q}{\mathbb{Q}}
\newcommand{\R}{\mathbb{R}}
\newcommand\forces{\Vdash}
\newcommand{\frakb}{\mathfrak{b}}
\newcommand{\frakd}{\mathfrak{d}}
\newcommand{\frakc}{\mathfrak{c}}
\newcommand{\fraks}{\mathfrak{s}}
\newcommand{\frakr}{\mathfrak{r}}
\newcommand{\frakv}{\mathfrak{v}}
\newcommand{\Pow}{\mathcal{P}}
\newcommand{\non}{\operatorname{non}}
\newcommand{\cov}{\operatorname{cov}}
\newcommand{\add}{\operatorname{add}}
\newcommand{\cof}{\operatorname{cof}}
\newcommand{\nul}{\mathcal{N}}
\newcommand{\meager}{\mathcal{M}}
\newcommand{\diam}{\mathrm{diam}}
\newcommand{\Haus}{\mathcal{H}}
\newcommand{\restrict}{\upharpoonright}
\newcommand{\ZFC}{\mathsf{ZFC}}
\newcommand{\Con}{\mathsf{Con}}
\newcommand{\Nho}{\mathcal{N}^h_\Omega}
\newcommand{\StdSets}{\operatorname{StdSets}}
\newcommand{\seq}[1]{{\langle#1\rangle}}
\DeclarePairedDelimiter\abs{\lvert}{\rvert}
\renewcommand\emptyset{\varnothing}
\renewcommand\subset{\subseteq}
\renewcommand{\setminus}{\smallsetminus}
\renewcommand{\le}{\leqslant}
\renewcommand{\ge}{\geqslant}
\newcommand{\needtocheck}[1][]{%
	\ifthenelse{\equal{#1}{}}{%
		\textcolor{blue}{[NeedToCheck]}%
	}{%
		\textcolor{blue}{[NeedToCheck: #1]}%
	}%
}
\newcommand{\todo}[1][]{%
	\ifthenelse{\equal{#1}{}}{%
		\textcolor{red}{[TODO]}%
	}{%
		\textcolor{red}{[TODO: #1]}%
	}%
}
\theoremstyle{definition}
\newtheorem{thm}{Theorem}
\newtheorem*{thm*}{Theorem}
\newtheorem{defi}[thm]{Definition}
\newtheorem*{defi*}{Definition}
\newtheorem{lem}[thm]{Lemma}
\newtheorem{prop}[thm]{Proposition}
\newtheorem*{lem*}{Lemma}
\newtheorem*{fact*}{Fact}
\newtheorem{rmk}[thm]{Remark}
\newtheorem*{rmk*}{Remark}
\newtheorem{cor}[thm]{Corollary}
\newtheorem*{cor*}{Corollary}
\newtheorem*{convention*}{Convention}
\newtheorem*{notation*}{Notation}
\newtheorem{question}[thm]{Question}
\definecolor{reasontext}{rgb}{0,0,0}
\definecolor{reasonbg}{rgb}{0.9,0.9,0.9}
\definecolor{myred}{RGB}{230, 150, 100}
\definecolor{mygreen}{RGB}{150, 230, 100}
\definecolor{myblue}{RGB}{130, 220, 220}
\title{The Hausdorff measure due to Davies and Rogers and its cardinal invariants}
\author{Tatsuya Goto}
\date{\today}
\affil{
	Institute of Discrete Mathematics and Geometry, TU Wien, \\ 
	Wiedner Hauptstrasse 8-10/104, 1040 Wien \\
	E-mail: tatsuya.goto@tuwien.ac.at}
\begin{document}
	\maketitle
	
	\begin{abstract}
		Davies and Rogers constructed a Hausdorff measure satisfying the following property: every Borel subset of the space has measure either $\infty$ or $0$.
		In this paper, we examine cardinal invariants of their measure.
	\end{abstract}
	
	\section{Introduction}
	
	Hausdorff measures, which generalize the concept of the Lebesgue measure, is an important tool that can be useful in many fields.
	As for these, it is natural from a measure theoretic point of view to examine the claim that for any Hausdorff measure and any Borel set having infinite measure, there is a Borel subset of measure positive and finite inside it.
	In 1969, Davies and Rogers constructed a Hausdorff measure that is a counterexample to this.
	In this paper, we examine cardinal invariants of their measure.
	
	The following is the standard definiton of cardinal invariants.
	
	\begin{defi}
		\begin{enumerate}
			\item We say $F \subset \omega^\omega$ is an unbounded family if $\neg(\exists g \in \omega^\omega)(\forall f \in F)(f \le^* g)$. Here $\le^*$ is the almost domination order. Put $\frakb = \min \{\abs{F} : F \subset \omega^\omega \text{ unbounded family} \}$.
			\item We say $F \subset \omega^\omega$ is a dominating family if $(\forall g \in \omega^\omega)(\exists f \in F)(g \le^* f)$. Put $\frakd = \min \{\abs{F} : F \subset \omega^\omega \text{ dominating family} \}$.
			\item $\nul$ and $\meager$ denote the Lebesgue measure zero ideal and Baire first category ideal on $2^\omega$, respectively.
			\item $\mathcal{E}$ denotes the $\sigma$-ideal on $2^\omega$ generated by closed Lebesgue null sets.
			\item For an ideal $\I$ on a set $X$: 
			\begin{enumerate}
				\item $\add(\I)$ (the additivity of $\I$) is the smallest cardinality of a family $F$ of sets in $\I$ such that the union of $F$ is not in $\I$.
				\item $\cov(\I)$ (the covering number of $\I$) is the smallest cardinality of a family $F$ of sets in $\I$ such that the union of $F$ is equal to $X$.
				\item $\non(\I)$ (the uniformity of $\I$) is the smallest cardinality of a subset $A$ of $X$ such that $A$ does not belong to $\I$.
				\item $\cof(\I)$ (the cofinality of $\I$) is the smallest cardinality of a family $F$ of sets in $\I$ that satisfies the following condition: for every $A \in \I$, there is $B \in F$ such that $A \subset B$.
			\end{enumerate}
		\end{enumerate}
	\end{defi}
	
	The cardinal invariants defined below are so-called localization cardinals and anti-localization cardinals.
	
	\begin{defi}
		\begin{enumerate}
			\item For $c \in (\omega+1)^\omega$ and $h \in \omega^\omega$, define $\prod c = \prod_{n \in \omega} c(n)$ and $S(c, h) = \prod_{n \in \omega} [c(n)]^{\le h(n)}$.
			\item For $x \in \prod c$ and $\varphi \in S(c, h)$, define $x \in^* \varphi$ iff $(\forall^\infty n)(x(n) \in \varphi(n))$ and define $x \in^\infty \varphi$ iff $(\exists^\infty n)(x(n) \in \varphi(n))$.
			\item $\frakc^\forall_{c, h} = \min \{ \abs{S} : S \subseteq S(c, h), (\forall x \in \prod c)(\exists \varphi \in S)(\forall^\infty n) (x(n) \in \varphi(n))  \}$.
			\item $\frakc^\exists_{c, h} = \min \{ \abs{S} : S \subseteq S(c, h), (\forall x \in \prod c)(\exists \varphi \in S)(\exists^\infty n) (x(n) \in \varphi(n))  \}$.
			\item $\frakv^\forall_{c, h} = \min \{ \abs{X} : X \subseteq \prod c, (\forall \varphi \in S(c, h))(\exists x \in X)(\exists^\infty n) (x(n) \not \in \varphi(n)) \}$.
			\item $\frakv^\exists_{c, h} = \min \{ \abs{X} : X \subseteq \prod c, (\forall \varphi \in S(c, h))(\exists x \in X)(\forall^\infty n) (x(n) \not \in \varphi(n)) \}$.
		\end{enumerate}
	\end{defi}
	
	We use the notions of Tukey relations, Polish relational systems and goodness to analyze cardinal invariants. As for these, the reader may consult \cite{6168e47a-d410-3c15-b326-a2970fc8c433}.
	
	\section{Review of Hausdorff measures}
	
	\begin{defi}
		We say $f \colon [0, \infty) \to [0, \infty)$ is a \textit{gauge function} if it is nondecreasing and right-continuous and $f(0) = 0$ holds.
	\end{defi}
	
	\begin{defi}
		Let $(X, d)$ be a metric space, $f$ be a gauge function , $\delta > 0$ be a real number.
		For $A \subset X$ and a sequence $\seq{C_n : n \in \omega}$ of subsets of $X$, we say this sequence is $\delta$-cover of $A$ if
		\[A \subset \bigcup_n C_n\text{ and }\diam(C_n) \le \delta \text{ for every $n$}.\]
		
		For a subset $A \subset X$, let
		\[
		\Haus^f_{X,\delta}(A) = \inf \left\{ \sum_{n \in \omega} f(\diam(C_n)) : \text{$\seq{C_n : n \in \omega}$ is a $\delta$-cover of $A$} \right\},
		\]
		which we call \textit{$\delta$-approximation of $f$-Hausdorff measure} of $A$. Also let
		\[
		\Haus^f_X(A) = \sup_{\delta > 0} \Haus^f_{X,\delta}(A),
		\]
		which we call \textit{$f$-Hausdorff measure of $A$}.
		
		We put $\scrN^f_X = \{ A \subset X : \Haus^f_X(A) = 0 \}$.
		
		If the context is clear, we omit the subscript $X$.
	\end{defi}

	\begin{rmk}
		Both $\delta$-approximation of $f$-Hausdorff measure and $f$-Hausdorff measure are outer measures.
	\end{rmk}

	\begin{rmk}\label{rmk:deltaapprox}
		For every metric space $(X, d)$, every gauge function $f$ and every real number $\delta > 0$, $\Haus^f(A) = 0$ holds iff $\Haus^f_\delta(A) = 0$ holds.
	\end{rmk}
	
	\section{General results on additivity and cofinality}
	
		Let $(X,d)$ be a separable metric space and let $f$ be a gauge function.
		We call the following condition \emph{Assumption ($*$)}: there exist a constant $\alpha>1$ and a countable family $\mathcal{C}$ of subsets of $X$ such that for every subset $A\subset X$ of finite diameter and every $\varepsilon>0$, there is some $C\in\mathcal{C}$ with
		\[
		A\subset C \quad\text{and}\quad f(\diam(C))\le \varepsilon+\alpha\cdot f(\diam(A)).
		\]
		\begin{lem}
			Let $(X,d)$ be a separable metric space.
			Then there exists a countable family $\mathcal{C}$ of subsets of $X$ such that for every $A\subset X$ of finite diameter and every $\varepsilon>0$, there is some $C\in\mathcal{C}$ with
			\[
			A\subset C \text{ and } \diam(C)\le \varepsilon+2\diam(A).
			\]
			Moreover, if $X$ is also an ultrametric space, then we can strengthen this to
			\[
			A\subset C \text{ and } \diam(C)\le \varepsilon+\diam(A).
			\]
		\end{lem}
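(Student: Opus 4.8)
The plan is to take $\mathcal{C}$ to be, essentially, the basis of balls coming from separability. Fix a countable dense set $D\subset X$ and set
\[
\mathcal{C}=\{\,B(q,r): q\in D,\ r\in\Q,\ r>0\,\}\cup\{\emptyset\},\qquad B(q,r):=\{x\in X: d(x,q)<r\},
\]
which is a countable family. Given $A\subset X$ of finite diameter and $\varepsilon>0$, I would dispose of $A=\emptyset$ trivially and otherwise fix a point $a\in A$. By density there is $q\in D$ with $d(a,q)<\varepsilon/4$, and then the triangle inequality gives $d(x,q)\le d(x,a)+d(a,q)<\diam(A)+\varepsilon/4$ for every $x\in A$. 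Now choose a rational $r$ with $\diam(A)+\varepsilon/4\le r<\diam(A)+\varepsilon/2$; this is possible since the interval is nonempty and, as $\diam(A)\ge 0$ and $\varepsilon>0$, consists of positive reals. Then $A\subset B(q,r)\in\mathcal{C}$, and since $d(x,y)\le d(x,q)+d(q,y)<2r$ for $x,y\in B(q,r)$ we get $\diam(B(q,r))\le 2r<2\diam(A)+\varepsilon$, as required.

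For the ultrametric strengthening the family $\mathcal{C}$ is unchanged; only the final diameter estimate improves. In an ultrametric space $\diam(B(q,r))\le r$, because $d(x,y)\le\max\{d(x,q),d(q,y)\}<r$ for $x,y\in B(q,r)$. Moreover, setting $M=\max\{\diam(A),d(a,q)\}$, the ultrametric inequality gives $d(x,q)\le\max\{d(x,a),d(a,q)\}\le M$ for every $x\in A$, and $M<\diam(A)+\varepsilon$ (case by case: either $M=\diam(A)<\diam(A)+\varepsilon$, or $M=d(a,q)<\varepsilon/4\le\diam(A)+\varepsilon$). Choosing a rational $r$ with $M<r\le\diam(A)+\varepsilon$ then yields $A\subset B(q,r)$ and $\diam(B(q,r))\le r\le\diam(A)+\varepsilon$.

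I do not expect a genuine obstacle here: this is a routine use of separability together with the triangle (resp.\ ultrametric) inequality. The only points needing a little care are (i) the edge cases — if $\diam(A)=0$ the argument above still applies verbatim, since it only uses $d(x,a)\le\diam(A)$ and the choice of $q$ near $a$, and $\emptyset\in\mathcal{C}$ takes care of the possibility $X=\emptyset$; and (ii) matching the choice of the rational radius $r$ with the strict inequalities that come from using open balls, which is why I keep a buffer of size $\varepsilon/4$ between the bound on the distances $d(x,q)$ and the target range for $r$. One could equally well work with closed balls $\{x:d(x,q)\le r\}$ and slightly different bookkeeping; the estimates $\diam\le 2r$ in general and $\diam\le r$ in the ultrametric case hold for those as well.
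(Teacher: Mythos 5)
Your proposal is correct and follows essentially the same route as the paper: the same countable family of rational-radius balls centered at a countable dense set, the same $\varepsilon/4$ buffer for the center and the choice of rational radius in $[\diam(A)+\varepsilon/4,\ \diam(A)+\varepsilon/2)$, and the same observation that ultrametric balls have diameter at most their radius. The only (harmless) differences are cosmetic: you add $\emptyset$ to $\mathcal{C}$ to cover the degenerate case $X=\emptyset$, and in the ultrametric case you re-select the radius rather than reusing the one from the general argument, which the paper does.
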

		\begin{proof}
		By separability, we can take a countable dense subset $D=\{a_n:n\in\omega\}$ of $X$.
		Let
		\[
		\mathcal{C}=\{B(a_n,q): n\in\omega,\ q\in\Q,\ q>0\}.
		\]
		We show that this $\mathcal{C}$ is the desired family.
		
		Let $A\subset X$ have finite diameter and let $\varepsilon>0$ be given.
		The case $A=\emptyset$ is clear.
		Assume $A\neq\emptyset$ and take $b\in A$.
		By density of $D$, choose $n$ such that $d(a_n,b)<\varepsilon/4$.
		Choose a positive rational $q$ such that
		\[
		\diam(A)+\varepsilon/4\le q<\diam(A)+\varepsilon/2.
		\]
		We claim that $C=B(a_n,q)$ is a desired member.
		
		First, we show $A\subset C$.
		Let $a\in A$. Then
		\[
		d(a_n,a)\le d(a_n,b)+d(b,a)<\varepsilon/4+\diam(A)\le q,
		\]
		so $a\in C$.
		
		Since $C$ is an open ball of radius $q$, the triangle inequality gives
		\[
		\diam(C)\le 2q\le \varepsilon+2\diam(A).
		\]
		
		If $X$ is an ultrametric space, then
		\[
		\diam(C)\le q\le \varepsilon+\diam(A).
		\]
		\end{proof}
	
		\begin{cor}
			Each of the following pairs consisting of a separable metric space $(X,d)$ and a gauge function $f$ satisfies Assumption ($*$):
			\begin{enumerate}
				\item $f$ is a continuous doubling gauge function; that is, there exists $r>0$ such that
				\[
				f(2x)<r f(x)\quad\text{for all }x\in[0,\infty).
				\]
				\item $(X,d)$ is an ultrametric space.
			\end{enumerate}
		\end{cor}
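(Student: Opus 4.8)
The plan is to reduce everything to the lemma just proved. That lemma supplies a single countable family $\mathcal{C}$ of open balls satisfying the \emph{metric} estimate $\diam(C)\le\varepsilon+\lambda\diam(A)$, where $\lambda=2$ in general and $\lambda=1$ when $X$ is ultrametric. What remains is to convert this estimate on diameters into the estimate on $f$-values required by Assumption($*$), and this is exactly where the two hypotheses of the corollary are used: (right-)continuity of $f$ lets us absorb the additive error $\varepsilon$ coming from the lemma, while the doubling inequality lets us absorb the multiplicative factor $\lambda=2$.

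For case (2) I would take $\mathcal{C}$ from the ultrametric half of the preceding lemma, so that $\lambda=1$, and set $\alpha=2$. Fix $A\subset X$ of finite diameter, write $d_0=\diam(A)$, and fix $\varepsilon>0$. First, using right-continuity of $f$ at $d_0$ (and $f(0)=0$ when $d_0=0$), choose $\eta>0$ with $f(t)<f(d_0)+\varepsilon$ whenever $d_0\le t<d_0+\eta$. Then apply the lemma with slack $\eta/2$ to obtain $C\in\mathcal{C}$ with $A\subset C$ and $\diam(C)\le\eta/2+d_0<d_0+\eta$. Since $f$ is nondecreasing (which also covers the case $\diam(C)<d_0$), we get $f(\diam(C))<f(d_0)+\varepsilon\le\varepsilon+2f(d_0)$, which is Assumption($*$) with $\alpha=2$.

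For case (1) I would instead take $\mathcal{C}$ from the general half of the lemma, so $\lambda=2$, and set $\alpha=r$; enlarging $r$ to $\max(r,2)$ if necessary (this only weakens the doubling inequality, as $f\ge 0$) we may assume $\alpha>1$. Run the same argument with $2d_0$ in place of $d_0$: pick $\eta>0$ with $f(t)<f(2d_0)+\varepsilon$ for $2d_0\le t<2d_0+\eta$, apply the lemma with slack $\eta/2$ to get $C$ with $A\subset C$ and $\diam(C)\le\eta/2+2d_0<2d_0+\eta$, hence $f(\diam(C))<f(2d_0)+\varepsilon$. If $d_0>0$ the doubling hypothesis gives $f(2d_0)<rf(d_0)=\alpha f(\diam(A))$; if $d_0=0$ then $f(2d_0)=f(0)=0=\alpha f(\diam(A))$. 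Either way $f(\diam(C))\le\varepsilon+\alpha f(\diam(A))$.

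I do not expect a serious obstacle; the argument is essentially bookkeeping. The two points that need attention are the order of the two approximations — the modulus $\eta$ of (right-)continuity of $f$ must be fixed \emph{before} invoking the lemma, so that the lemma's additive slack can be chosen below $\eta$ — and the degenerate case $\diam(A)=0$, where one leans on $f(0)=0$ and must still check that $\alpha$ can be taken strictly greater than $1$. The possibility that the ball $C$ produced by the lemma has diameter smaller than $\lambda\diam(A)$ is harmless, again because $f$ is nondecreasing.
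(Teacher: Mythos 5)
Your proof is correct and follows essentially the same route as the paper's: take the countable family of balls from the preceding lemma, use right-continuity of $f$ to absorb the additive slack, and use the doubling condition to absorb the factor $2$ (the paper applies right-continuity at $\diam(A)$ and then doubles, while you apply it at $2\diam(A)$ directly — an immaterial reordering). Your extra care in checking that $\alpha$ can be taken strictly greater than $1$ and in handling $\diam(A)=0$ is a small tidiness improvement over the paper's version, not a different argument.
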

		\begin{proof}
		(1) Take the countable family $\mathcal{C}$ from the lemma, and let $r>0$ witness the doubling condition.
		Let $A\subset X$ have finite diameter and let $\varepsilon>0$.
		Since $f$ is right-continuous, choose $\delta>0$ such that
		\[
		f(\diam(A)+\delta)-f(\diam(A))<\varepsilon/r.
		\]
		By the choice of $\mathcal{C}$, there exists $C\in\mathcal{C}$ with $A\subset C$ and
		\[
		\diam(C)\le 2\delta+2\diam(A).
		\]
		Then
		\[
		f(\diam(C))\le f(2\delta+2\diam(A))\le r f(\delta+\diam(A))\le \varepsilon+r f(\diam(A)).
		\]
		Hence Assumption ($*$) holds.
		
		(2) Take the countable family $\mathcal{C}$ given by the lemma for ultrametric spaces.
		Let $A\subset X$ have finite diameter and let $\varepsilon>0$.
		Since $f$ is right-continuous, choose $\delta>0$ such that
		\[
		f(\diam(A)+\delta)-f(\diam(A))<\varepsilon.
		\]
		By the choice of $\mathcal{C}$, there exists $C\in\mathcal{C}$ with $A\subset C$ and
		\[
		\diam(C)\le \delta+\diam(A).
		\]
		Then
		\[
		f(\diam(C))\le f(\delta+\diam(A))\le \varepsilon+f(\diam(A)).
		\]
		Hence Assumption ($*$) holds.
		\end{proof}
		
		\begin{lem}
			Let $(X,d)$ be a separable metric space and let $f$ be a gauge function satisfying Assumption ($*$).
			Then for $A\subset X$, the following are equivalent:
			$A$ has $f$-Hausdorff measure $0$ if and only if for every $\varepsilon>0$ there exists a sequence
			$\seq{C_n:n\in\omega}\subset\mathcal{C}$ such that
			\[
			A\subset\bigcup_{n\in\omega} C_n
			\quad\text{and}\quad
			\sum_{n\in\omega} f(\diam(C_n))<\varepsilon.
			\]
		\end{lem}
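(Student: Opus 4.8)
The plan is to prove the two implications separately. Assumption($*$) will be used only for the direction ``$\Haus^f(A)=0\Rightarrow$ good $\mathcal{C}$-covers exist''; the converse needs only monotonicity of $f$, separability of $X$, and Remark~\ref{rmk:deltaapprox}.

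For the forward direction, suppose $\Haus^f(A)=0$ and fix $\varepsilon>0$. Since $\Haus^f(A)=\sup_{\delta>0}\Haus^f_\delta(A)$, in particular $\Haus^f_1(A)=0$, so I can pick a $1$-cover $\seq{A_n:n\in\omega}$ of $A$ with $\sum_{n}f(\diam(A_n))<\varepsilon/(2\alpha)$, where $\alpha>1$ is the constant from Assumption($*$). Each $A_n$ has diameter at most $1$, so Assumption($*$) applies to it; applying it to $A_n$ with tolerance $\varepsilon/2^{n+2}$ produces $C_n\in\mathcal{C}$ with $A_n\subset C_n$ and $f(\diam(C_n))\le \varepsilon/2^{n+2}+\alpha f(\diam(A_n))$. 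Then $A\subset\bigcup_n A_n\subset\bigcup_n C_n$ and
\[
\sum_n f(\diam(C_n))\le \frac{\varepsilon}{2}+\alpha\sum_n f(\diam(A_n))<\frac{\varepsilon}{2}+\frac{\varepsilon}{2}=\varepsilon,
\]
so $\seq{C_n:n\in\omega}$ is as required.

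For the converse, assume that for every $\varepsilon>0$ there is $\seq{C_n:n\in\omega}\subset\mathcal{C}$ with $A\subset\bigcup_n C_n$ and $\sum_n f(\diam(C_n))<\varepsilon$. By Remark~\ref{rmk:deltaapprox} it suffices to prove $\Haus^f_\delta(A)=0$ for every $\delta>0$ (in fact a single $\delta$ would do). Fix $\delta>0$ and distinguish two cases according to $f(\delta)$. If $f(\delta)=0$, then by monotonicity $f$ vanishes on $[0,\delta]$, so every countable $\delta$-cover of $A$ has $f$-sum $0$; and a countable $\delta$-cover exists since $X$ is separable: the balls of radius $\delta/2$ about a countable dense subset cover $X\supset A$ with diameters $\le\delta$. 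Hence $\Haus^f_\delta(A)=0$. If $f(\delta)>0$, apply the hypothesis with an arbitrary $\varepsilon$ satisfying $0<\varepsilon\le f(\delta)$ to get $\seq{C_n}\subset\mathcal{C}$; then for each $n$ we have $f(\diam(C_n))\le\sum_m f(\diam(C_m))<\varepsilon\le f(\delta)$, so monotonicity of $f$ forces $\diam(C_n)<\delta$. Thus $\seq{C_n}$ is a $\delta$-cover of $A$ and $\Haus^f_\delta(A)\le\sum_n f(\diam(C_n))<\varepsilon$; since $\varepsilon$ may be taken arbitrarily small, $\Haus^f_\delta(A)=0$.

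The only step requiring a little care is the converse: the sets delivered by the hypothesis lie in $\mathcal{C}$ but are not a priori of small diameter, so they cannot immediately be used as a $\delta$-cover. The case split on $f(\delta)$ resolves this — when $f(\delta)>0$ the required smallness of the total $f$-sum forces each piece below $\delta$, and when $f(\delta)=0$ any countable $\delta$-cover (which exists by separability) already has $f$-sum $0$. There is no genuine obstacle; the forward direction is a routine term-by-term application of Assumption($*$) with a summable sequence of error tolerances.
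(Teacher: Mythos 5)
Your proof is correct, and the substantive direction ($\Haus^f(A)=0$ implies good $\mathcal{C}$-covers exist) follows the same route as the paper: take a cover of $A$ with small total $f$-sum and enlarge each piece via Assumption($*$) with a summable sequence of error tolerances. The paper dismisses the converse as clear, whereas your case split on whether $f(\delta)=0$ correctly addresses the genuine (if minor) subtlety that the sets supplied by the hypothesis lie in $\mathcal{C}$ but need not have diameter below $\delta$.
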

		\begin{proof}
		The “if” direction is clear. We prove the “only if” direction.
		
		Assume $A$ has $f$-Hausdorff measure $0$, and let $\varepsilon>0$.
		By definition, we can find $\seq{A_n:n\in\omega}\subset \Pow(X)$ such that
		\[
		A\subset\bigcup_{n\in\omega} A_n
		\quad\text{and}\quad
		\sum_n f(\diam(A_n))<\varepsilon/\alpha.
		\]
		Take a sequence of positive reals $\seq{\eta_n:n\in\omega}$ such that
		\[
		\sum_n\bigl(\eta_n+\alpha\cdot f(\diam(A_n))\bigr)\le \varepsilon.
		\]
		For each $n$, choose $C_n\in\mathcal{C}$ such that $A_n\subset C_n$ and
		\[
		\diam(C_n)\le \eta_n+\alpha\cdot \diam(A_n).
		\]
		Then we obtain $\sum_n \diam(C_n)\le \varepsilon$.
	\end{proof}
	
		\begin{prop}
			Let $(X,d)$ be a separable metric space and let $f$ be a gauge function. Assume Assumption ($*$) holds for these.
			Then $\mathcal{N}^f_X \preceq_\mathrm{T} \nul$.
			In particular, under this assumption,
			\[
			\add(\nul)\le \add(\mathcal{N}^f_X)
			\quad\text{and}\quad
			\cof(\mathcal{N}^f_X)\le \cof(\nul).
			\]
		\end{prop}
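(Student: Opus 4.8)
Here is a plan for proving $\mathcal{N}^f_X \preceq_{\mathrm T} \nul$ (the two numerical inequalities then being formal consequences).

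\medskip

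\noindent\emph{Overview.} The plan has four steps: (1) use the preceding Lemma to replace $\mathcal{N}^f_X$ by a purely combinatorial ideal attached to the countable weighted family $\mathcal{C}$; (2) recognise this ideal as the preimage, under a fixed map $X\to 2^\omega$, of the null ideal of a $\sigma$-finite outer measure of countable Maharam type, so that the Tukey reduction onto that null ideal is automatic; (3) reduce that null ideal to $\nul$; and (4) read off $\add(\nul)\le\add(\mathcal{N}^f_X)$ and $\cof(\mathcal{N}^f_X)\le\cof(\nul)$.

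\medskip

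\noindent\emph{Steps (1)–(2).} Fix $\mathcal{C}=\{C_k:k\in\omega\}$ witnessing Assumption($*$) and put $w_k:=f(\diam C_k)$ (discard the $C_k$ of infinite diameter, and, without changing $\mathcal{N}^f_X$, those with $w_k\ge\tfrac12$). By the Lemma, $A\in\mathcal{N}^f_X$ iff for every $\varepsilon>0$ there is $S\subseteq\omega$ with $\sum_{k\in S}w_k<\varepsilon$ and $A\subseteq\bigcup_{k\in S}C_k$. Define $e\colon X\to 2^\omega$ by $e(x)(k)=1\iff x\in C_k$, and on $2^\omega$ put $\rho(B):=\inf\{\sum_{k\in S}w_k:B\subseteq\bigcup_{k\in S}U_k\}$ where $U_k:=\{z:z(k)=1\}$. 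Since $A\subseteq\bigcup_{k\in S}C_k$ iff $e[A]\subseteq\bigcup_{k\in S}U_k$, the Lemma says precisely $\mathcal{N}^f_X=e^{-1}[\nul_\rho]$ with $\nul_\rho:=\{B\subseteq 2^\omega:\rho(B)=0\}$. For any map $e$ and any ideal $\mathcal J$ one has $e^{-1}[\mathcal J]\preceq_{\mathrm T}\mathcal J$, witnessed by $A\mapsto e[A]$ and $B\mapsto e^{-1}[B]$ (if $e[A]\subseteq B$ then $A\subseteq e^{-1}[B]$); hence $\mathcal{N}^f_X\preceq_{\mathrm T}\nul_\rho$. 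Applying Assumption($*$) to singletons shows $\mathcal{C}$ covers $X$ by sets of finite weight, so $\rho$ restricted to $\bigcup_kU_k$ — which carries all of $\nul_\rho$ — is $\sigma$-finite, and $\nul_\rho$ is generated by the Borel sets $\bigcap_m\bigcup_{k\in S_m}U_k$ with $\sum_{k\in S_m}w_k\to 0$.

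\medskip

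\noindent\emph{Step (3): $\nul_\rho\preceq_{\mathrm T}\nul$.} This is the heart of the matter. After separating off easy degenerate cases — e.g.\ if $\inf_kw_k>0$, or $\sum_{k:w_k<\delta}w_k<\infty$ for some $\delta>0$, one checks directly that $\mathcal{N}^f_X$ is already Tukey-trivial (it is $\mathcal{P}(X)$, or of the form $\{A:A\subseteq\bigcup_{w_k=0}C_k\}$) — one is left with a $\sigma$-finite outer measure $\rho$ on $2^\omega$ whose measure algebra is generated by the countably many classes of the $U_k$, i.e.\ of countable Maharam type; the null ideal of such a measure is Tukey-below the Lebesgue null ideal. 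I would obtain this either from the structure theory of $\sigma$-finite measure algebras of countable Maharam type (Fremlin), or by building a Galois–Tukey morphism $\nul_\rho\to\nul$ by hand. The direct construction is where I expect the real difficulty: the covers available for $\nul$ use basic clopen sets of dyadic weight $2^{-n}$, which do not fit the prescribed weights $w_k$, so one must first partition $\omega$ into finite blocks of comparable total weight and then faithfully realise the ``cover by $U_k$'s'' structure, block by block, inside copies of a standard measure space. Getting this matching right, and checking both directions of the Galois–Tukey inequality, is the step I would spend the most care on.

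\medskip

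\noindent\emph{Step (4): consequences.} Combining steps (2) and (3), $\mathcal{N}^f_X\preceq_{\mathrm T}\nul_\rho\preceq_{\mathrm T}\nul$, so $\mathcal{N}^f_X\preceq_{\mathrm T}\nul$. Viewing each $\sigma$-ideal as a directed set under $\subseteq$, a Tukey reduction $\mathcal I\preceq_{\mathrm T}\mathcal J$ carries unbounded families to unbounded families and, in the other direction, cofinal families to cofinal families; since for a $\sigma$-ideal being unbounded means having union outside the ideal, this gives exactly $\add(\mathcal J)\le\add(\mathcal I)$ and $\cof(\mathcal I)\le\cof(\mathcal J)$. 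With $\mathcal I=\mathcal{N}^f_X$ and $\mathcal J=\nul$ this is $\add(\nul)\le\add(\mathcal{N}^f_X)$ and $\cof(\mathcal{N}^f_X)\le\cof(\nul)$.
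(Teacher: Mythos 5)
Your Steps (1), (2) and (4) are sound and match the paper's setup: the preceding Lemma does reduce $\mathcal{N}^f_X$ to the ideal of sets coverable by subfamilies of $\mathcal{C}$ of arbitrarily small total weight, and the formal transfer to $\add$ and $\cof$ at the end is standard. The problem is Step (3), which you yourself flag as ``the heart of the matter'' and then do not carry out. As written this is not a proof but a reduction of the Proposition to an equally hard (essentially equivalent) statement: that the null ideal $\nul_\rho$ of a weighted-covering (Method~I) outer measure generated by countably many sets is Tukey below $\nul$. The appeal to Maharam-type structure theory does not close the gap as stated: $\rho$ is only an outer measure, the generating sets $U_k$ need not be Carath\'eodory measurable, $\rho$ fails to be $\sigma$-finite on all of $2^\omega$ (the all-zeros point has infinite outer measure), and so the identification of $\nul_\rho$ with the null ideal of a genuine $\sigma$-finite measure of countable Maharam type --- the hypothesis of the Fremlin-style theorem you want to quote --- is exactly the unverified content. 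The ``direct construction'' you sketch (blocking $\omega$ by comparable weights and matching covers into a standard measure space) is the part that needs to be done, and you have not done it.

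The missing idea, which is how the paper proceeds, is to route the reduction through Bartoszy\'nski's combinatorial characterization $(\nul,\nul,\subset)\equiv_\mathrm{T}(\omega^\omega,\mathcal{S},\in^*)$ with $\mathcal{S}=\prod_n[\omega]^{\le 2^n}$, rather than through measure algebras. Concretely: for each $n$ enumerate as $\seq{I_{n,j}:j\in\omega}$ the finite subfamilies $I\subset\mathcal{C}$ with $\sum_{C\in I}f(\diam(C))\le 4^{-n}$; a slalom $S$ is sent to $\psi(S)=\bigcap_m\bigcup_{n\ge m}\bigcup_{i\in S(n)}\bigcup I_{n,i}$, which is $f$-null because $\sum_{n\ge m}4^{-n}\cdot 2^n=2^{-m}$; conversely a null set $A$ is sent to $\varphi(A)\in\omega^\omega$ by concatenating its $\mathcal{C}$-covers into one sequence, cutting it into blocks with tail weight $\le 4^{-n}$, and recording the index of the $n$-th block in the enumeration of $\mathcal{I}_n$. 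Then $\varphi(A)\in^* S$ implies $A\subset\psi(S)$. This is a short, explicit morphism; the $4^{-n}$ versus $2^n$ bookkeeping is precisely the device that replaces the block-matching argument you were worried about. I recommend you either adopt this route or, if you want to keep the measure-algebra route, state precisely which theorem of Fremlin you are invoking and verify its hypotheses for $\rho$.
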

		\begin{proof}
			This is based on Theorem 534B of \cite{fremlin2008measure},
			
		Let $\mathcal{S}=\prod_{n\in\omega}[\omega]^{\le 2^n}$.
		By Bartoszy\'nski's characterization of the null ideal,
		\[
		(\nul,\nul,\subset)\equiv_\mathrm{T}(\omega^\omega,\mathcal{S},\in^*),
		\]
		so it suffices to show $\mathcal{N}^f_X \le_\mathrm{T} (\omega^\omega,\mathcal{S},\in^*)$.
		Take the countable family $\mathcal{C}$ from the assumption.
		
		For each $n\in\omega$, let
		\[
		\mathcal{I}_n=\left\{ I\subset \mathcal{C}\ \text{finite}:\ \sum_{C\in I} f(\diam(C))\le 4^{-n}\right\}.
		\]
		Let $\seq{I_{n,j}:j\in\omega}$ be an enumeration of $\mathcal{I}_n$.
		
		For $S\in\mathcal{S}$, define
		\[
		\psi(S)=\bigcap_{m\in\omega}\ \bigcup_{n\ge m}\ \bigcup_{i\in S(n)}\ \bigcup I_{n,i}.
		\]
		
		We show that $\psi(S)\in \scrN^f$.
		\begin{align*}
			\Haus^f_\infty(\psi(S))
			&\le \sum_{n\ge m}\ \sum_{i\in S(n)} 4^{-n}
			= \sum_{n\ge m} 4^{-n}\cdot 2^n
			= 2^{-m}\to 0\ \ (m\to\infty),
		\end{align*}
		so this holds. Thus $\psi:\mathcal{S}\to \scrN^f_X$ is defined.
		Fix $A\in\mathcal{N}^f_X$.
		For each $n\in\omega$, we can take a sequence $\seq{C_{n,i}:i\in\omega}\subset\mathcal{C}$ such that
		\[
		A\subset \bigcup_i C_{n,i}
		\text{ and }
		\sum_i f(\diam(C_{n,i}))\le 2^{-n}.
		\]
		Rearranging the double sequence $\seq{C_{n,i}:n,i\in\omega}$ into a single sequence, we obtain
		a sequence $\seq{C_i:i\in\omega}$ such that
		\[
		\sum_i f(\diam(C_i))\le 1
		\text{ and }
		A\subset \bigcap_m \bigcup_{i\ge m} C_i.
		\]
		Choose a sequence of natural numbers $\seq{k(n):n\in\omega}$ satisfying
		\[
		\sum_{i=k(n)}^\infty f(\diam(C_i))\le 4^{-n}
		\text{ for every }n.
		\]
		
		Define $\varphi(A)\in\omega^\omega$ by
		\[
		\{C_i: k(n)\le i<k(n+1)\}=I_{n,\varphi(A)(n)}
		\text{ for each }n.
		\]
		Then $\varphi:\scrN^f_X\to \omega^\omega$ is defined and satisfies
		\[
		A\subset \bigcap_m \bigcup_{n\ge m} \bigcup I_{n,\varphi(A)(n)}.
		\]
		Therefore, for any $A\in\mathcal{N}^f$ and $S\in\mathcal{S}$, the condition $\varphi(A)\in^* S$ implies
		$A\subset \psi(S)$. This is what we needed to show.
	\end{proof}

	\section{Review of the construction due to Davies and Rogers}
	
	In this section, we review the construction due to Davies and Rogers \cite{daviesrogers}.
		
	\begin{lem}\label{lem:graph}
		Let $n > 0$ be an natural number.
		Then there is a finite graph $G$ that satisfies the following properties:
		\begin{enumerate}
			\item $G$ cannot be partitioned into $n$ independent subsets.
			\item For every function $w \colon G \to \R_{\ge 0}$, there is an independent subset $H \subset G$ such that
			$
			\sum_{g \in H} w(g)  \ge \frac14 \sum_{g \in G} w(g).
			$
			\item There is a family $\seq{H(x) : x \in G}$ of independent subsets such that 
			\begin{enumerate}
				\item $x \in H(x)$ for every $x \in G$, and
				\item $\abs{\{ x \in G : a \in H(x) \}} \ge \frac14 \abs{G}$ for every $a \in G$.
			\end{enumerate}
		\end{enumerate}
	\end{lem}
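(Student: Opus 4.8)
The plan is to realize $G$ as a Kneser graph. For integers $m>2k\ge 2$, let $\mathrm{KG}(m,k)$ be the graph whose vertices are the $k$-element subsets of $[m]$, two such subsets being adjacent precisely when they are disjoint; I identify $[m]$ with $\Z_m$, so that $\Z_m$ acts on the vertex set by translation. Fix a prime $m\ge 2n+2$ and set $k:=\lceil m/4\rceil$. Then a few elementary inequalities hold: $1\le k$, $2k<m$, $k/m\ge 1/4$, and $m-2k+2\ge n+1$ (the last using $m\ge 2n+2$). I claim $G:=\mathrm{KG}(m,k)$ has all three properties. Property (1) is then immediate: by the Lov\'asz--Kneser theorem, $\chi(G)=m-2k+2\ge n+1>n$, so $G$ cannot be partitioned into $n$ independent subsets.

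For property (2), I use the ``stars'' $A_i:=\{S\in G:i\in S\}$ for $i\in[m]$; each $A_i$ is independent, since any two of its members both contain $i$ and so are not disjoint. Given $w\colon G\to\R_{\ge 0}$, averaging over $i$ gives
\[
\frac1m\sum_{i\in[m]}\ \sum_{S\in A_i}w(S)\;=\;\frac1m\sum_{S\in G}\abs{S}\,w(S)\;=\;\frac{k}{m}\sum_{S\in G}w(S)\;\ge\;\frac14\sum_{S\in G}w(S),
\]
so some $A_i$ is the desired independent set $H$. (This just records that the fractional chromatic number of $\mathrm{KG}(m,k)$ equals $m/k\le 4$.)

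For property (3), the key step is to construct a selector $e\colon G\to[m]$ with $e(S)\in S$ for every vertex $S$ and with all fibres $e^{-1}(i)$ of the same size $N/m$, where $N:=\abs{G}=\binom mk$. Granting such an $e$, put $H(x):=A_{e(x)}$: this is independent and contains $x$, and for every vertex $a$,
\[
\abs{\{x\in G:a\in H(x)\}}\;=\;\abs{\{x\in G:e(x)\in a\}}\;=\;\sum_{i\in a}\abs{e^{-1}(i)}\;=\;\abs{a}\cdot\frac Nm\;=\;k\cdot\frac Nm\;\ge\;\frac N4 .
\]
To build $e$ I use that $m$ is prime: since $0<k<m$, a nonzero translation generates all of $\Z_m$, so no $k$-set is fixed by a nontrivial translation, and hence every $\Z_m$-orbit of $k$-sets has size exactly $m$; in particular $N/m$ is the number of orbits. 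On an orbit with chosen representative $S_0$, pick any $\sigma\in S_0$ and set $e(S_0+t):=\sigma+t$ for $t\in\Z_m$; this is well defined, satisfies $e(S_0+t)\in S_0+t$, and, as $t$ ranges over $\Z_m$, its value ranges over all of $[m]$ bijectively. Summing over the $N/m$ orbits gives $\abs{e^{-1}(i)}=N/m$ for every $i$.

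The real obstacle is hidden in the choice of graph: one must produce a finite graph whose chromatic number exceeds $n$ while its independence number is at least a quarter of its order --- that is, a large gap between $\chi$ and $\chi_f$ --- and the Lov\'asz--Kneser theorem for Kneser graphs is the natural source of such graphs. Once the graph is fixed, properties (2) and (3) are routine averaging and counting; the one extra idea needed is the prime-modulus trick producing a perfectly balanced selector in (3) (for a general modulus $m$ a nearly balanced selector still exists by a transportation / max-flow argument, which would suffice as well).
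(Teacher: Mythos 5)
Your proof is correct, but it takes a genuinely different route from the paper. The paper follows Davies--Rogers: it samples $m$ random points on the sphere $S^n$, joins nearly antipodal pairs, uses spherical caps $C(x)\cap G$ as the independent sets, derives (1) from the Lusternik--Schnirelmann--Borsuk theorem applied to the closed neighbourhoods of the colour classes, gets (2) by integrating the weight of $C(x)\cap G$ over $x\in S^n$ against the uniform measure (the cap measure exceeds $\tfrac14$), and disposes of (3) with a brief ``Monte-Carlo type argument.'' You instead take $G=\mathrm{KG}(m,k)$ for a prime $m\ge 2n+2$ and $k=\lceil m/4\rceil$, get (1) from the Lov\'asz--Kneser theorem, and use the stars $A_i$ in place of the caps; your averaging for (2) is the discrete analogue of the paper's integral (both arguments really just say that the fractional chromatic number is at most $4$ while the chromatic number exceeds $n$). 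The trade-off: your construction is explicit and deterministic, and your property (3) is actually sharper and more completely argued than the paper's --- the prime-order $\Z_m$-action acts freely on $k$-sets, so your selector $e$ is exactly balanced and $\abs{\{x: a\in H(x)\}}=k\binom{m}{k}/m$ on the nose, whereas the paper leaves (3) as a sketched probabilistic existence claim. The cost is that you invoke the Lov\'asz--Kneser theorem as a black box; since its standard proofs themselves run through Borsuk--Ulam/LSB, the topological content of the two arguments is ultimately the same, but the paper's version keeps that topology on the surface while yours hides it inside a cited theorem. Your closing remark that a nearly balanced selector exists for general $m$ is right, but the prime choice is the cleaner way to present it.
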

	\begin{proof}
		The proof is the same as Lemma 1 of \cite{daviesrogers} and Example 4.3.43 of \cite{zapletal2008forcing}.
		
		Let $S^n=\{x\in\R^{n+1}:\|x\|=1\}$ be the $n$-sphere, and let $\sigma^n$ be the uniform measure on $S^n$.
		Set $\varepsilon_n = 1/(2\sqrt{n})$.
		
		For $x\in S^n$, define
		\[
		C(x)=\{y\in S^n : x\cdot y \ge \varepsilon_n\},
		\]
		and call it the \emph{cap} centered at $x$.
		
		For all sufficiently large $n$ and all $x\in S^n$, we have $\sigma^n(C(x))> \frac14$
		(see \cite{daviesrogers} for this computation).
		Refix such a $n$ larger than given $n$.
		
		Let $m$ be a sufficiently large integer.
		Consider choosing $m$ points on $S^n$ independently at random with respect to $\sigma^n$.
		Fix a realization of these $m$ points, and let $G$ be the vertex set of a graph.
		We put an edge between $x,y\in G$ if and only if
		\[
		\|x-y\|\ge 2-\varepsilon_n^2.
		\]
		
		Then for any $x\in S^n$, the set $C(x)\cap G$ is an independent set.
		
		We prove (1) of the lemma.
		Assume that $G$ is partitioned into $n$ subsets $G_1,\dots,G_n$.
		For $i=1,\dots,n$, define
		\[
		H_i=\{h\in S^n : \text{there exists } g\in G_i \text{ with } \|h-g\|\le \varepsilon_n\}.
		\]
		Each $H_i$ is a closed subset of $S^n$.
		Since $m$ is sufficiently large and the points are random, the sets $H_i$ cover $S^n$.
		Now apply the Lusternik--Schnirelmann--Borsuk theorem
		(if $S^n$ is covered by at most $n+1$ closed sets, then one of them contains an antipodal pair).
		Hence there exist $i\in\{1,\dots,n\}$ and $h\in S^n$ such that $\pm h\in H_i$.
		Thus we can find $g,g'\in G_i$ with $\|h-g\|\le \varepsilon_n$ and $\|h+g'\|\le \varepsilon_n$.
		It follows that $g$ and $g'$ are adjacent, and (1) is proved.
		
		Next we prove (2).
		We may assume that $w(g)>0$ for some $g\in G$, and also normalize so that
		$\sum_{g\in G} w(g)=1$.
		It suffices to find an independent set $H\subset G$ such that
		$\sum_{g\in H} w(g)\ge \frac14$.
		Since $C(x)\cap G$ is independent, it is enough to show that there exists $x\in S^n$ with
		\[
		\sum_{g\in C(x)\cap G} w(g)\ge \frac14.
		\]
		
		Let $c=\sigma^n(C(x))$ (this does not depend on $x$). Then
		\begin{align*}
			\int_{S^n} \sum_{g\in C(x)\cap G} w(g)\, d\sigma^{n}(x)
			&= \sum_{g\in G} \int_{g\cdot x \ge \varepsilon_n} w(g)\, d\sigma^{n}(x) = c > \frac14.
		\end{align*}
		Therefore, for some $x\in S^n$ we have $\sum_{g\in C(x)\cap G} w(g)>\frac14$, proving (2).
		
		For (3), take $H(x)=C(x)\cap G$ and use $\sigma^n(C(x))>\frac14$ together with a Monte-Carlo type argument.
	\end{proof}

	For each natural number $n$, fix one of the finite graphs $G$ obtained by substituting it into the Lemma \ref{lem:graph}  and let $G(n)$ be this graph.
	
	We define a sequence $\seq{M_n, G_n, N_n : n \in \omega}$ as follows:
	\begin{align*}
		M_0 &= 1 \\
		G_n  &= G(M_n) \\
		N_n &= \abs{G_n} \\
		M_{n+1} &= 2 N_n \text{ (for $n \in \omega$)}
	\end{align*}

	Note that
	\[
	\lim_{n \to \infty} \frac{N_0 \dots N_{n-1}}{M_0 \dots M_n} = 0 \text{ and } \lim_{n \to \infty} \frac{N_0 \dots N_n}{M_0 \dots M_n} = \infty.
	\]
	The first one follows from $M_{n+1} = 2 N_n$ and the second one follows from $N_n \ge \frac{4}{3} M_n$, which is a consequence of (1) and (2) of Lemma \ref{lem:graph}.

	Let $\Omega = \prod_{n \in \omega} G_n$.
	
	Induce a metric $\rho$ into $\Omega$ as follows: For $x, y \in \Omega, x \ne y$, letting $n$ be the first index that $x$ and $y$ differ,
	\[
	\rho(x, y) = \begin{cases}
		2^{-n+1} & \text{(If $x(n)$ and $y(n)$ are adjacent in the graph $G_n$)} \\
		2^{-n} & \text{(otherwise)}.
	\end{cases}
	\]
	Define a gauge function $h$ as follows:
	\[
	h(0) = 0, h(2^{-n}) = \frac{1}{M_0 \dots M_n} \text{ (for $n \in \omega$)}.
	\]
	For points other than these, define the value of $h$ by	 interpolating linearly.
	
	Davies and Rogers proved the following two propositions.
	
	\begin{prop}
		$\Haus_\Omega^h(\Omega) = \infty$.
	\end{prop}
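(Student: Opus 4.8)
The plan is to bound $\Haus^h_{2^{-K}}(\Omega)$ from below for each $K\in\omega$ and to use that $\Haus^h_\Omega(\Omega)=\sup_{\delta>0}\Haus^h_\delta(\Omega)\ge\sup_K\Haus^h_{2^{-K}}(\Omega)$, so it suffices to show these lower bounds tend to $\infty$. Every value of $\rho$ is a power of $2$, so every nonempty $A\subset\Omega$ has $\diam(A)=2^{-\ell}$ for some integer $\ell$; such an $A$ is contained in the \emph{basic clopen set} obtained by fixing the common restriction of the points of $A$ to the coordinates $<\ell$ and restricting coordinate $\ell$ to $I:=\{x(\ell):x\in A\}$, which is independent in $G_\ell$ (otherwise $\diam(A)\ge 2^{-\ell+1}$), and one checks this set again has diameter $2^{-\ell}$; call $\ell$ its \emph{depth}. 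Since $h$ is nondecreasing, a standard reduction — replace each member of a $2^{-K}$-cover of $\Omega$ of positive diameter by the corresponding basic clopen set, replace each member of diameter $0$ by a basic clopen ball about it of negligible diameter, and pass to a finite subcover by compactness of $\Omega$ — shows that it suffices to prove: every finite cover of $\Omega$ by basic clopen sets of depth $\ge K$ has total weight $\sum_C \tfrac{1}{M_0\cdots M_{\operatorname{depth}(C)}}$ at least $\tfrac{N_0\cdots N_{K-1}}{M_0\cdots M_{K-1}}$.

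The heart of the matter is the following claim, which I would prove by induction. Writing $\Omega_{\ge m}=\prod_{j\ge m}G_j$ (so $\Omega=\Omega_{\ge 0}$) and weighting a depth-$\ell$ basic set of $\Omega_{\ge m}$ by $\tfrac{1}{M_m\cdots M_\ell}$, the claim is: for all $m\le K$, every finite cover of $\Omega_{\ge m}$ by basic clopen sets of depth $\ge K$ has total weight at least $\tfrac{N_m\cdots N_{K-1}}{M_m\cdots M_{K-1}}$ (the empty product $1$ when $m=K$). The only input about the graphs is property~(1) of Lemma~\ref{lem:graph}, in the form: since $G_n$ is not a union of $M_n$ independent sets, every cover of $G_n$ by independent sets has at least $M_n+1$ members. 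I would induct on $L-m$, where $L$ is the largest depth occurring in the cover. If $m<K$, no piece has depth $m$, so each piece lies in one of the $N_m$ slices $\{x:x(m)=a\}$, each isometric to $\Omega_{\ge m+1}$; applying the inductive hypothesis inside each slice and noting that a depth-$\ell$ piece has weight $\tfrac{1}{M_m}$ times its weight inside $\Omega_{\ge m+1}$, the bound gets multiplied by $\tfrac{N_m}{M_m}$, as desired. If $m=K$ and $L=m$, every piece restricts coordinate $m$ to an independent set of $G_m$ and these must cover $G_m$, so there are at least $M_m+1$ pieces and the weight is at least $\tfrac{M_m+1}{M_m}\ge 1$. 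If $m=K$ and $L>m$, I merge the deepest pieces: for each length-$L$ sequence $u$, either some shallower piece of the cover already contains the cylinder $[u]$ (and I delete the depth-$L$ pieces contained in $[u]$), or the depth-$L$ pieces contained in $[u]$ restrict coordinate $L$ to independent sets covering $G_L$, so there are at least $M_L+1$ of them, and I replace them by the single cylinder $[u]$ — a basic clopen set of depth $L-1$. Since $\tfrac{1}{M_m\cdots M_{L-1}}=\tfrac{M_L}{M_m\cdots M_L}\le\tfrac{M_L+1}{M_m\cdots M_L}$, this does not increase the total weight; the new family is again a finite cover of $\Omega_{\ge m}$ by basic clopen sets of depth $\ge K$, now of largest depth $\le L-1$, so the inductive hypothesis finishes the case.

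Taking $m=0$ in the claim gives $\Haus^h_{2^{-K}}(\Omega)\ge\tfrac{N_0\cdots N_{K-1}}{M_0\cdots M_{K-1}}$, which tends to $\infty$ as $K\to\infty$ by the second displayed limit in the text; hence $\Haus^h_\Omega(\Omega)=\infty$.

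I expect the main difficulty to be the bookkeeping in the $m=K$ step of the induction: verifying that a shallower basic set meeting a cylinder $[u]$ of length equal to its own depth must in fact contain $[u]$, that depth-$L$ pieces with a different length-$L$ stem are disjoint from $[u]$ (so that the stated dichotomy is exhaustive), and that the replacement cylinders still have diameter $\le 2^{-K}$, so that the merge genuinely lowers the largest depth without increasing the weight or ceasing to be a cover; together with making precise the first-paragraph reduction to finite covers by basic clopen sets (in particular that members of diameter $0$, and the points they leave exposed, can be absorbed into basic clopen balls of arbitrarily small total weight). Invoking property~(1) and carrying out the slice step should be routine by comparison.
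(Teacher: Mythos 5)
Your argument is correct, and it is essentially the Davies--Rogers argument that the paper itself only cites rather than reproduces: your ``basic clopen sets'' are exactly the paper's standard sets (your first paragraph in effect reproves Lemma~\ref{lem:standardset}), and the only combinatorial input you use --- that $G_n$ cannot be covered by $M_n$ independent sets --- is property~(1) of Lemma~\ref{lem:graph}, combined with the divergence of $N_0\cdots N_{n}/(M_0\cdots M_{n})$ already recorded in the text. The slice/merge induction and the reduction to finite covers of standard sets both check out, including the dichotomy in the merge step (a piece of depth $<L$ either contains or misses $[u]$, and a depth-$L$ piece is contained in $[u]$ exactly when $u$ is its stem), so the points you flag as delicate do go through.
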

	
	\begin{prop}
		For every subset $A \subset \Omega$, we have either $\Haus_\Omega^h(A) = 0$ or $\Haus_\Omega^h(A) = \infty$.
	\end{prop}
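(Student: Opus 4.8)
The plan is to follow Davies and Rogers: exploit the self-similarity of $(\Omega,\rho,h)$ together with the coloring properties of the graphs $G_n$ to show that refining the scale of a cover forces a definite multiplicative blow-up, which accumulates to $\infty$ on any set of positive measure. First I would reduce the statement: by Remark~\ref{rmk:deltaapprox}, $\Haus^h_\Omega(A)>0$ iff $\Haus^h_{\Omega,2^{-m}}(A)>0$ for every $m$, so it suffices to prove that $\Haus^h_{\Omega,2^{-m}}(A)>0$ for all $m$ implies $\sup_m\Haus^h_{\Omega,2^{-m}}(A)=\infty$. Next I would fix the scaling: for $|\tau|=m$ the shift $T_m\colon[\tau]\to\Omega^{(m)}:=\prod_{k\ge m}G_k$ (with the induced ultrametric) is a similarity of ratio $2^m$, and under it $\Haus^h_\Omega$ restricted to subsets of $[\tau]$ becomes $h(\diam[\tau])\cdot\Haus^{h^{(m)}}_{\Omega^{(m)}}$, where $h^{(m)}(2^{-k})=\tfrac1{M_m\cdots M_{m+k}}$; the shifted data $\seq{M_k,G_k,N_k:k\ge m}$ satisfy exactly the hypotheses of the original (in particular $N_k\ge\tfrac43M_k$ and the two displayed limits), so everything proved for $\Omega$ holds verbatim for each $\Omega^{(m)}$. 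Since in an ultrametric every set of diameter $\le2^{-m}$ lies in a unique cylinder of length $m$, we obtain
\[
\Haus^h_{\Omega,2^{-m}}(A)=\sum_{|\tau|=m}h(\diam[\tau])\cdot\Haus^{h^{(m)}}_{\Omega^{(m)},1}\bigl(T_m(A\cap[\tau])\bigr).
\]

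Second, I would compute the measure of a cylinder exactly. The only sets of diameter $\le2^{-m}$ available to cover a cylinder $[\sigma]$ of length $\le m$ are the "independent cylinders" $[\sigma']\cap\{x(m)\in I\}$ with $|\sigma'|=m$, $\sigma'\supseteq\sigma$, $I$ independent in $G_m$, each of gauge value $h(2^{-m})$, and passing to strictly deeper levels is strictly more expensive because $h(2\cdot2^{-\ell})=M_\ell\,h(2^{-\ell})$. Optimising reduces to covering $G_m$ by independent sets, so $\Haus^h_{\Omega,2^{-m}}([\sigma])=\bigl(\prod_{i=|\sigma|}^{m-1}N_i\bigr)\chi(G_m)\,h(2^{-m})$, i.e.
\[
\Haus^h_{\Omega,2^{-m}}([\sigma])=h(\diam[\sigma])\cdot\frac{\chi(G_m)}{M_m}\prod_{i=|\sigma|}^{m-1}\frac{N_i}{M_i}.
\]
Its consecutive ratio is $\tfrac{\chi(G_{m+1})}{2\chi(G_m)}$, which using $\chi(G_{m+1})>M_{m+1}=2N_m$ and $\chi(G_m)\le N_m-\alpha(G_m)+1\le\tfrac34N_m+1$ (the bound $\alpha(G_m)\ge\tfrac14N_m$ comes from property (2) of Lemma~\ref{lem:graph} with uniform weights) is $>1$ and is $\ge\tfrac{N_m}{\frac34N_m+1}\to\tfrac43$. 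Hence $\Haus^h_{\Omega,2^{-m}}([\sigma])\to\infty$, and summing over $\sigma=\emptyset$ recovers the first proposition as a special case.

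Third, for general $A$ I would run the blow-up down the tree. Let $\mathcal T=\{\tau:\Haus^h_\Omega(A\cap[\tau])>0\}$. Subadditivity shows $\mathcal T$ has no leaves, and since singletons are $\Haus^h$-null and $A\setminus[\mathcal T]$ is a countable union of $\Haus^h$-null cylinders, no node of $\mathcal T$ can sit above a linear subtree; thus $\mathcal T$ is perfect. Writing $q^{(m)}_\tau=\Haus^{h^{(m)}}_{\Omega^{(m)},1}(T_m(A\cap[\tau]))$, the fact that finer covers cost at least as much gives $q^{(m)}_\tau\le\tfrac1{M_m}\sum_{g\in G_m}q^{(m+1)}_{\tau\append g}$, and the analysis of the second step shows that this inequality loses a definite factor $\ge\tfrac43-o(1)$ unless $T_m(A\cap[\tau])$ is, modulo $\Haus^{h^{(m)}}$-null sets, contained in a single slice $\{x:x(m)=g\}$ — a degeneracy which by perfectness of $\mathcal T$ cannot persist along any branch. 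Combining the displayed decomposition with this per-level gain, and using that $\Haus^h_\Omega(A)>0$ forces a fixed positive portion of $\Haus^h_{\Omega,2^{-m}}(A)$ to be carried by cylinders that have already left the degenerate regime, yields $\sup_m\Haus^h_{\Omega,2^{-m}}(A)=\infty$.

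The main obstacle is the quantitative claim of the third step: for non-clopen $A$, bounding $\Haus^h_{\Omega,2^{-(m+1)}}(A\cap[\tau])$ from below in terms of $\Haus^h_{\Omega,2^{-m}}(A\cap[\tau])$ when the scale-$2^{-m}$ cover is allowed to use independent cylinders straddling many children of $[\tau]$. This is exactly what the Davies–Rogers graphs are engineered for: a single straddling set covers $|I|\le\alpha(G_m)$ children for the price of one, so a naive child-by-child estimate loses a factor $\alpha(G_m)$, but covering $G_m$ by independent sets requires more than $M_m$ of them and $\alpha(G_m)\ge\tfrac14N_m$ while $N_m\ge\tfrac43M_m$, so a genuine gain survives at every level where $A\cap[\tau]$ meets at least two children; propagating this bookkeeping through the perfect tree is the heart of the argument.
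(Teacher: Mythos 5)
The paper does not actually prove this proposition---it is quoted from Davies and Rogers, and the only ingredient the paper records is Lemma~\ref{lem:standardset} (every set of diameter at most $2^{-j}$ sits inside a standard set of the same diameter), which your ``independent cylinders'' reproduce. Your first two steps are essentially sound modulo bookkeeping: note that $\diam[\tau]=2^{-m+1}$ rather than $2^{-m}$ for a full cylinder of length $m$, since $G_m$ is not independent; and the exact value you claim for $\Haus^h_{\Omega,2^{-m}}([\sigma])$ needs an exchange argument ruling out mixed-rank covers, although a lower bound of the same order suffices and does follow from $\chi(G_m)>M_m$ once one has reduced to covers by standard sets via Lemma~\ref{lem:standardset}.

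The genuine gap is your third step. The claimed dichotomy---either $T_m(A\cap[\tau])$ is essentially contained in one slice $\{x:x(m)=g\}$ or the passage from scale $2^{-m}$ to scale $2^{-(m+1)}$ costs a factor $\ge\tfrac43-o(1)$---is unjustified and fails at the level of individual nodes: if $A\cap[\tau]$ meets only two non-adjacent children $g,g'$ of $\tau$, then a single standard set of rank $m$ covers both slices at once and no gain is extracted at that level, yet $[\tau]$ is not degenerate in your sense. The set $\prod_n\{g_n,g_n'\}$ with $g_n,g_n'$ non-adjacent shows that a perfect positivity tree by itself yields nothing: its level-$m$ covering cost is $2^m/(M_0\cdots M_m)\to 0$, so it is $h$-null even though it branches everywhere. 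Hence perfectness of $\mathcal T$ cannot drive the blow-up. What is needed, and what Davies and Rogers actually use, is the \emph{weighted} independent-set property (2) of Lemma~\ref{lem:graph}: starting from a near-optimal cover of $A$ by standard sets, one weights each $G_n$ by the gauge mass the cover places over each vertex, extracts an independent set carrying at least $\tfrac14$ of that mass, and plays the resulting refined covers against the two limits $N_0\cdots N_{n-1}/(M_0\cdots M_n)\to 0$ and $N_0\cdots N_n/(M_0\cdots M_n)\to\infty$ to contradict $0<\Haus^h_\Omega(A)<\infty$. You invoke property (2) only with uniform weights (to bound $\alpha(G_m)$ from below), so the engine of the dichotomy is absent; as you yourself concede, the bookkeeping you defer is the entire heart of the proof.
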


	They used the following notion to prove above two propositions.

	Let $S \subset \Omega$ be a subset and assume that $S$ is of the form:
	\[
	S = \prod_{i \in \omega} H_i,
	\]
	and for some $j \in \omega$ we have
	\begin{itemize}
		\item For every $i < j$, $H_i$ is a singleton,
		\item $H_j \ne \emptyset$, and
		\item For every $i > j$, we have $H_i  =G_i$.
	\end{itemize}

	If $H_j$ is an independent subset of $G_j$, then we say $S$ is a \textit{standard set of rank $j$}.
	
	Note that standard sets of rank $j$ have diameter $2^{-j}$.
	
	\begin{lem}\label{lem:standardset}
		For every subset $S$ of  $\Omega$, if $\abs{S} \ge 2$, then there is a standard set containing $S$ and having same diameter.
	\end{lem}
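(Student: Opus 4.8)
The plan is to read the desired standard set directly off $S$, using the first coordinate on which $S$ fails to be constant.

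First I would let $j$ be the least $i\in\omega$ with $|\{s(i):s\in S\}|\ge 2$; this exists since $|S|\ge 2$. For $i<j$ all members of $S$ have a common value $a_i$ at coordinate $i$, and I set $H:=\{s(j):s\in S\}\subseteq G_j$, so $|H|\ge 2$. The crucial step is the diameter computation: any two distinct members of $S$ agree on every coordinate $<j$, hence first disagree at some coordinate $\ge j$, so every pairwise $\rho$-distance in $S$ is at most $2^{-j+1}$, and it equals $2^{-j+1}$ exactly for pairs whose first disagreement is at coordinate $j$ and whose values at coordinate $j$ are adjacent in $G_j$. Since $|H|\ge 2$, there are two members of $S$ differing at coordinate $j$; therefore $\diam(S)=2^{-j+1}$ if $H$ contains an edge of $G_j$, and $\diam(S)=2^{-j}$ if $H$ is an independent subset of $G_j$.

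Next I would split into two cases. If $H$ is independent, set
\[
T:=\{a_0\}\times\cdots\times\{a_{j-1}\}\times H\times\prod_{i>j}G_i .
\]
This $T$ is by definition a standard set of rank $j$, clearly $S\subseteq T$, and $\diam(T)=2^{-j}=\diam(S)$, so $T$ works. If $H$ is not independent, then $\diam(S)=2^{-j+1}$, and I would instead take
\[
T:=\{a_0\}\times\cdots\times\{a_{j-1}\}\times\prod_{i\ge j}G_i ,
\]
whose distinguished coordinate is $j-1$ carrying the singleton $\{a_{j-1}\}$ (a singleton is vacuously independent); thus $T$ is a standard set of rank $j-1$, satisfies $S\subseteq T$, and $\diam(T)=2^{-(j-1)}=2^{-j+1}=\diam(S)$. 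In both cases we have produced a standard set containing $S$ with the same diameter.

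I expect the main obstacle to be getting the diameter dichotomy in the second paragraph exactly right from the definition of $\rho$ — in particular checking that $2^{-j+1}$ is attained only via an adjacent pair at coordinate $j$, so that in the non-independent case the correct rank to descend to is $j-1$ (and, for this to make sense, that one is indeed in the regime $j\ge 1$). Once that is pinned down, verifying that the two candidate sets $T$ are standard of the stated rank and contain $S$ is a routine unwinding of the definition of a standard set.
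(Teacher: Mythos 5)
The paper states this lemma without proof (it is quoted from Davies--Rogers), so there is no in-paper argument to compare against; your proof is the natural one and, for $j\ge 1$, it is correct: the diameter dichotomy is right, and both candidate sets $T$ are standard sets of the stated rank containing $S$ with the correct diameter.

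The one genuine problem is exactly the edge case you flagged and then left open: $j=0$ with $H=\{s(0):s\in S\}$ not independent. There is no ``rank $-1$'' to descend to, and the difficulty is not merely notational: in that case $\diam(S)=2^{-0+1}=2$, while every standard set has diameter $2^{-j}\le 1$ (and $G_0=G(M_0)=G(1)$ does contain an edge, since by Lemma \ref{lem:graph}(1) it cannot be partitioned into one independent set, so such $S$ really exist). Hence no standard set of the same diameter can contain such an $S$, and the lemma as literally stated fails for it. The standard resolution is to observe that the lemma is only ever needed for sets of diameter at most $1$ (the Hausdorff measure computations in the paper use $\delta$-covers with $\delta\le 1$), and that for $\diam(S)\le 1$ the bad case cannot occur: if $j=0$ and two points of $S$ had adjacent values at coordinate $0$, their distance would already be $2$. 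So under the hypothesis $\diam(S)\le 1$, either $j\ge 1$ or $H$ is independent, and your two cases cover everything. You should either add this diameter restriction to the statement or make this observation explicitly; as written, the case split is not exhaustive.
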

	
	\section{$\ZFC$ results}
	
	\begin{thm}
		$\cov(\Nho) \le \non(\mathcal{E})$ and $\cov(\mathcal{E}) \le \non(\Nho)$ hold.
	\end{thm}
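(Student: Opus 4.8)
The plan is to deduce both inequalities at once from a single Galois--Tukey connection. It suffices to construct, for each $y\in 2^\omega$, a set $B_y\in\Nho$, and for each $x\in\Omega$, a set $E_x\in\mathcal E$, such that
\[
(\forall x\in\Omega)(\forall y\in 2^\omega)\ \bigl(y\notin E_x\ \Rightarrow\ x\in B_y\bigr).
\]
Granting this: if $Y\subseteq 2^\omega$ witnesses $\non(\mathcal E)$, then for each $x$ the set $E_x\in\mathcal E$ cannot contain $Y$, so some $y\in Y$ has $x\in B_y$; hence $\{B_y:y\in Y\}$ is a cover of $\Omega$ by members of $\Nho$ and $\cov(\Nho)\le\non(\mathcal E)$. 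Dually, if $Z\subseteq\Omega$ witnesses $\non(\Nho)$, then for each $y$ the set $B_y\in\Nho$ cannot contain $Z$, so some $z\in Z$ has $z\notin B_y$, whence $y\in E_z$ by the contrapositive; hence $\{E_z:z\in Z\}$ covers $2^\omega$ by members of $\mathcal E$ and $\cov(\mathcal E)\le\non(\Nho)$.

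For the construction, fix an interval partition $\seq{J_j:j\in\omega}$ of $\omega$ with $2^{\abs{J_j}}\ge 2N_j$, and identify $2^\omega$ with $\prod_j 2^{J_j}$ (literally the same space, topology and measure, so $\mathcal E$ is unchanged). For each $j$ choose a surjection $\pi_j\colon 2^{J_j}\to G_j$ all of whose fibres have size at least $2^{\abs{J_j}}/(2N_j)$, and take the family $\seq{H_j(a):a\in G_j}$ of independent subsets of $G_j$ furnished by clause (3) of Lemma~\ref{lem:graph}, so that $\abs{\{a\in G_j:b\in H_j(a)\}}\ge\tfrac14 N_j$ for every $b\in G_j$. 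Define
\[
B_y=\{x\in\Omega:(\exists^\infty j)\ x(j)\in H_j(\pi_j(y(j)))\},\qquad
E_x=\{y\in 2^\omega:(\forall^\infty j)\ x(j)\notin H_j(\pi_j(y(j)))\}.
\]
The required implication is then immediate: $y\notin E_x$ is literally the statement $(\exists^\infty j)\ x(j)\in H_j(\pi_j(y(j)))$, i.e.\ $x\in B_y$. Only the two membership claims remain.

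To see $B_y\in\Nho$: fix $y$ and put $K_j=H_j(\pi_j(y(j)))$, a nonempty independent subset of $G_j$. For $t\in\prod_{i<j}G_i$ the set $\{x:x\restrict j=t,\ x(j)\in K_j\}$ has diameter $\le 2^{-j}$ — two of its distinct points either agree through coordinate $j$ and first differ at some coordinate $>j$ (so $\rho\le 2^{-j}$), or differ at coordinate $j$ with entries in the independent set $K_j$, hence non-adjacent, so $\rho=2^{-j}$. Taking $j\ge j_0$ and all $t\in\prod_{i<j}G_i$, these cover $B_y$, and since $M_0\cdots M_j=2^{j}N_0\cdots N_{j-1}$ (from $M_0=1$ and $M_{k+1}=2N_k$) the total cost telescopes:
\begin{align*}
\sum_{j\ge j_0}\bigl(N_0\cdots N_{j-1}\bigr)\,h(2^{-j})
&=\sum_{j\ge j_0}\frac{N_0\cdots N_{j-1}}{M_0\cdots M_j}=\sum_{j\ge j_0}2^{-j}=2^{1-j_0}.
\end{align*}
These covers use sets of diameter $\le 2^{-j_0}$ with total cost $2^{1-j_0}\to 0$, so $\Haus^h_{\Omega,\delta}(B_y)=0$ for every $\delta>0$ and hence $\Haus^h_\Omega(B_y)=0$ by Remark~\ref{rmk:deltaapprox}.

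To see $E_x\in\mathcal E$: set $T_j=\{s\in 2^{J_j}:x(j)\notin H_j(\pi_j(s))\}$, so that $E_x=\bigcup_{m\in\omega}C_m$ with $C_m=\{y:(\forall j\ge m)\ y(j)\in T_j\}$ closed. By clause (3) of Lemma~\ref{lem:graph}, $\abs{\{a\in G_j:x(j)\in H_j(a)\}}\ge\tfrac14 N_j$, so the $\pi_j$-preimage of that set, which is $2^{J_j}\setminus T_j$, has size at least $\tfrac14 N_j\cdot 2^{\abs{J_j}}/(2N_j)=2^{\abs{J_j}}/8$; thus $\abs{T_j}2^{-\abs{J_j}}\le 7/8$ for every $j$, so $C_m$ has measure $\prod_{j\ge m}\abs{T_j}2^{-\abs{J_j}}=0$, and each $C_m$ is a closed null set, whence $E_x=\bigcup_m C_m\in\mathcal E$. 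The crux is this matching of shapes: a Davies--Rogers null set admits the ``$\exists^\infty$'' presentation used for $B_y$ — and it is precisely the weight identity $M_0\cdots M_j=2^jN_0\cdots N_{j-1}$ that makes the covering cost a convergent geometric series — while the complement of such a set, for fixed $x$, takes a ``$\forall^\infty$'' form landing in $\mathcal E$, but only because at each level a fixed positive proportion of the chosen independent sets contains any prescribed vertex, which is exactly clause (3) of Lemma~\ref{lem:graph}; everything else is bookkeeping, so that clause is the essential ingredient.
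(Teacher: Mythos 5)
Your proof is correct and follows essentially the same route as the paper: the same Borel relation built from the independent sets of clause (3) of Lemma~\ref{lem:graph}, with the same covering estimate $N_0\cdots N_{j-1}/(M_0\cdots M_j)=2^{-j}$ on the $\Nho$ side and the same positive-proportion bound on the $\mathcal{E}$ side. The only (welcome) difference is that you code the $\mathcal{E}$-side into $2^\omega$ explicitly via the surjections $\pi_j$ and note the closedness of the pieces $C_m$, whereas the paper works with the product measure $\mu$ on $\Omega$ and leaves the identification of $\mathcal{E}_\mu$ with $\mathcal{E}$ implicit.
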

	\begin{proof}
		For every $n \in \omega$, let $\mu_n$ be the uniform measure on $G_n$. 
		Let $\mu$ be the product measure on $\Omega$ of $\seq{\mu_n : n \in \omega}$.
		
		For every $n \in \omega$ and $x \in G_n$, let $H(x)$ be one of the independent set $H$ found by (3) of Lemma \ref{lem:graph} such that $x \in H$.
		
		Define a Borel set $B$ as follows:
		\[
		B = \{ (x, y) \in \Omega^2 : (\exists^\infty n)\ y(n) \in H(x(n)) \}.
		\]
		It is sufficient to prove that
		\begin{enumerate}
			\item \label{item:vertical} For every $x \in \Omega$, $\Haus^h_\Omega(B_x) = 0$ holds.
			\item \label{item:horizontal}For every $y \in \Omega$, $\mu(\Omega \setminus B^y) = 0$ holds.
		\end{enumerate}
		
		First, we observe that
		\begin{align*}
			\Haus^h_{\Omega,1}(B_x) &\le \Haus^h_{\Omega,1}(\{ (x, y) \in \Omega^2 : \exists n \ge n_0\ y(n) \in H(x(n)) \}) \\
			&\le \sum_{n \ge n_0} \Haus^h_{\Omega,1}(\{ (x, y) \in \Omega^2 : y(n) \in H(x(n)) \}) \\
			&\le \sum_{n \ge n_0} \frac{N_0 \dots N_{n-1}}{M_0 \dots M_n} \le \sum_{n \ge n_0} \frac{1}{M_0} \left(\frac12\right)^{n} \to 0 \text{ ($n_0 \to \infty$)}.
		\end{align*}
		So we have (\ref{item:vertical}).
		
		Second, for $y \in \Omega$, we consider the set
		\[
		\Omega \setminus B^y = \{ x \in \Omega : \forall^\infty n\ y(n) \not \in H(x(n)) \}.
		\]
		By independence, we have
		\[
		\mu(\Omega \setminus B^y) = \prod_{n \in \omega} \mu_n(\{ a \in G_n : y(n) \not \in H(a) \})
		\]
		But we know that $\mu_i(\{ a \in G_i : y(i) \not \in H(a) \})  \le \frac34$ by (3) of Lemma \ref{lem:graph}. Thus we have (\ref{item:horizontal}).
	\end{proof}


	Therefore, we can draw the diagram as follows:
	
	\[
	\begin{tikzpicture}
		\newcommand{\w}{2.7}
		\newcommand{\h}{2.5}
		
		\node (aleph1) at (-\w*0.7, 0) {$\aleph_1$};
		
		\node (addN) at (0, 0) {$\add(\nul)$};
		\node (covN) at (0, \h*2) {$\cov(\nul)$};
		
		\node (addM) at (\w, 0) {$\add(\meager)$};
		\node (b) at (\w, \h) {$\mathfrak{b}$};
		\node (nonM) at (\w, \h*2) {$\non(\meager)$};
		
		\node (covM) at (\w*2, 0) {$\cov(\meager)$};
		\node (d) at (\w*2, \h) {$\mathfrak{d}$};
		\node (cofM) at (\w*2, \h*2) {$\cof(\meager)$};
		
		\node (nonN) at (\w*3, 0) {$\non(\nul)$};
		\node (cofN) at (\w*3, \h*2) {$\cof(\nul)$};
		
		\node (c) at (\w*3.7, \h*2) {$\frakc$};

		\node (addNh) at (\w*0.3, \h*0.4) {$\add(\scrN^h_\Omega)$};
		\node (cofNh) at (\w*2.7, \h*1.6) {$\cof(\scrN^h_\Omega)$};
		
		\node (covNh) at (\w*0.4, \h*1) {$\cov(\scrN^h_\Omega)$};
		\node (nonNh) at (\w*2.6, \h*1) {$\non(\scrN^h_\Omega)$};
		
		\node (nonE) at (\w*0.6, \h*1.6) {$\non(\mathcal{E})$};
		\node (covE) at (\w*2.3, \h*0.4) {$\cov(\mathcal{E})$};
		
		\draw[thick,->] (aleph1) to (addN);
		\draw[thick,->] (addN) to (covN);
		\draw[thick,->] (addN) to (addM);
		\draw[thick,->] (covN) to (nonM);	
		\draw[thick,->] (addM) to (b);
		\draw[thick,->] (b) to (nonM);
		\draw[thick,->] (addM) to (covM);
		\draw[thick,->] (nonM) to (cofM);
		\draw[thick,->] (covM) to (d);
		\draw[thick,->] (d) to (cofM);
		\draw[thick,->] (b) to (d);
		\draw[thick,->] (covM) to (nonN);
		\draw[thick,->] (cofM) to (cofN);
		\draw[thick,->] (nonN) to (cofN);
		\draw[thick,->] (cofN) to (c);
		\draw[thick,->] (cofN) to (c);

		\draw[thick,->] (nonE) to (nonM);
		\draw[thick,->] (nonE)  ..controls(\w*1.9,\h*1.5) and (\w*2.1, \h*1.5)..  (nonN);
		\draw[thick,->] (covM) to (covE);
		\draw[thick,->] (covN)..controls(\w*1.1,\h*0.5) and (\w*0.9, \h*0.5)..  (covE);
		
		\draw[thick,->] (addN) to (addNh);
		\draw[thick,->] (addNh) to (covNh);
		\draw[thick,->] (covNh) to (cofNh);
		\draw[thick,->] (addNh) to (nonNh);
		\draw[thick,->] (nonNh) to (cofNh);
		\draw[thick,->] (cofNh) to (cofN);
		\draw[thick,->] (covNh) to (nonE);
		\draw[thick,->] (covE) to (nonNh);

		\draw[thick,->] (addM) to (nonE);
		\draw[thick,->] (covE) to (cofM);
	\end{tikzpicture}
	\]
	
	\section{Concictency Results}

	Zapletal \cite{zapletal2008forcing} proved the following theorem.

	\begin{thm}
		Let $P_{\Nho}$ be the idealized forcing of $\Nho$.
		Then, $P_{\Nho}$ is $\omega^\omega$-bounding and it adds no splitting reals.
	\end{thm}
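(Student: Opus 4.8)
The plan is to run fusion arguments in $P_{\Nho}$, using the standard sets of Lemma~\ref{lem:standardset} as the combinatorial skeleton of conditions and the quantitative features of Lemma~\ref{lem:graph} — finiteness of each $G_n$ (so that levels of trees are finite and admit pigeonholing), the chromatic lower bound $\chi(G_n)\ge N_n\ge\frac43 M_n$ coming from part~(1), and the weight-grabbing statement of part~(2) — to track which Borel sets remain $I$-positive, where $I:=\Nho$. Two preliminary reductions set the stage. First, $\Omega=\prod_n G_n$ is compact, and by the classical inner regularity of Hausdorff measures (every analytic set of positive $\Haus^h$-measure contains a compact subset of positive measure) a dense set of conditions consists of compact $I$-positive sets, which I represent as $[T]$ for pruned subtrees $T$ of $\Seq$; shrinking $T$ I may assume $[T]_s$ is $I$-positive for every $s\in T$, where $[T]_s=\{x\in[T]:s\subset x\}$. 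Second, by the dichotomy of the previous section, an $I$-positive Borel set is never a finite union of $I$-null sets, so all ``mod $I$'' reasoning below is clean.

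The key technical input, used in both parts, is a \emph{fusion-with-positivity} scheme: a notion of fusion sequence $[T]=[T_0]\supseteq[T_1]\supseteq\cdots$ with $T_{k+1}\restriction g(k)=T_k\restriction g(k)$ for a prescribed $g(k)\to\infty$, such that (a) at step $k$, below each of the finitely many nodes of $T_k$ at level $g(k)$ and thinning successor sets only at levels $>g(k)$, one may pass to any prescribed $I$-positive Borel subset while keeping it $I$-positive and keeping the successor sets ``not $M_n$-colorable''; and (b) the limit $C=[\bigcap_k T_k]$ is again a compact $I$-positive condition. Granting this, $\omega^\omega$-bounding follows via \emph{continuous reading of names}: given $I$-positive $B$ and a Borel $f\colon B\to\omega^\omega$, run the fusion so that at step $k$ the value $f(x)\restriction k$ depends only on $x\restriction g(k)$ for every $x\in[T_{k+1}]$ (at each level-$g(k)$ node, discard the $I$-null residue where $f\restriction k$ is not yet locally constant and keep an $I$-positive remainder with controlled splitting). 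Then $f\restriction C$ is continuous, $C$ is compact, so $f[C]$ is compact and $f\restriction C$ is pointwise dominated by the ground-model function $n\mapsto\max_{x\in C}f(x)(n)$; by genericity, $P_{\Nho}$ is $\omega^\omega$-bounding.

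For ``adds no splitting real'', fix $I$-positive $B$ and a Borel $f\colon B\to 2^\omega$, coding a subset $\dot a=f(\dot x_{\mathrm{gen}})$ of $\omega$, and use the previous paragraph to pass to compact $I$-positive $C_0=[T_0]\subseteq B$ with $f\restriction C_0$ continuous; thus $f(x)(n)$ depends on $x\restriction\ell(n)$ for some finite $\ell(n)$. If $\ell$ is essentially bounded, $f(\dot x_{\mathrm{gen}})$ is forced (mod finite) into the ground model by some condition and there is nothing to prove; otherwise I run a fusion $[T_0]\supseteq[T_1]\supseteq\cdots$ together with $n_0<n_1<\cdots$ (always with $\ell(n_k)>g(k)$) and bits $\epsilon_k$, maintaining $f(x)(n_k)=\epsilon_k$ for all $x\in[T_{k+1}]$. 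At step $k$: for each level-$g(k)$ node $s$ of $T_k$, one of $\{x\in[T_k]_s:f(x)(n_k)=i\}$, $i\in\{0,1\}$, is $I$-positive. If some bit $\epsilon_k$ works for every such $s$, keep for each $s$ a compact $I$-positive subset of $\{f(\cdot)(n_k)=\epsilon_k\}$ below $s$ with controlled splitting (thinning only above level $g(k)$) and glue. If no uniform bit works — varying $n_k$ over the infinitely many $n>n_{k-1}$ with $\ell(n)>g(k)$ — a pigeonhole over the finitely many nodes, together with $\sigma$-additivity of $I$, produces an infinite $N'\in V$ and a compact $I$-positive condition forcing $\dot a\cap N'=\emptyset$, and we are done. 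In the non-trivial case, $C=[\bigcap_k T_k]$ is a condition with $f(x)(n_k)=\epsilon_k$ for every $x\in C$ and every $k$; letting $b$ be whichever of $\{n_k:\epsilon_k=0\}$ and $\{n_k:\epsilon_k=1\}$ is infinite, $b\in V$ and $C$ forces $b\subseteq\dot a$ or $b\cap\dot a=\emptyset$. Either way $\dot a$ is forced not to split $b$, so $P_{\Nho}$ adds no splitting real.

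The main obstacle is the fusion-with-positivity scheme itself, specifically the invariant (``canonical form'' of conditions) preserved by the thinning in~(a) and surviving the limit in~(b). Both reduce to relativizing to a subtree the lower-bound half of the Davies--Rogers argument (that $\Haus^h_\Omega(\Omega)=\infty$): each time one restricts a successor set $R_T(t)\subset G_n$ so as to retain a fixed fraction of the relevant weight via Lemma~\ref{lem:graph}(2), one spends part of the surplus in $\chi(G_n)\ge\frac43 M_n$, and one must verify that the cumulative spend along every branch still leaves a set of positive $h$-Hausdorff measure — precisely the interplay between $N_n/M_n\ge\frac43$ and the estimates $N_0\cdots N_{n-1}/(M_0\cdots M_n)\to 0$, $N_0\cdots N_n/(M_0\cdots M_n)\to\infty$ recorded after Lemma~\ref{lem:graph}. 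Once that bookkeeping is secured, the pigeonhole steps over the finite levels of $T_k$ and the case analysis in the splitting argument are routine.
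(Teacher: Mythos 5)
First, note that the paper does not prove this theorem at all: it is quoted as a result of Zapletal and the reader is referred to \cite{zapletal2008forcing}, so there is no in-paper argument to compare yours against. Judged on its own, your proposal is an outline of the standard fusion strategy for definable proper forcings, but it defers exactly the step that constitutes the entire content of the theorem. You write that ``the main obstacle is the fusion-with-positivity scheme itself'' and that once its bookkeeping ``is secured'' the rest is routine --- but securing it \emph{is} the theorem. Nothing in the paper or in general theory tells you that (a) one can thin the successor sets of a tree at each level, below each of finitely many nodes, while keeping the set of branches $\Nho$-positive, or that (b) the intersection of such a fusion sequence is still $\Nho$-positive. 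Both require a quantitative relativization of the Davies--Rogers lower-bound argument to subtrees, i.e.\ an invariant on successor sets that survives both the thinning and the limit and that defeats every efficient cover by standard sets. Until that invariant is exhibited and verified, the $\omega^\omega$-bounding and no-splitting-real arguments have nothing to run on.

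Two specific assertions are also wrong or unjustified as stated. First, your opening reduction invokes ``the classical inner regularity of Hausdorff measures (every analytic set of positive $\Haus^h$-measure contains a compact subset of positive measure)'' to get a dense set of compact conditions. The classical subset theorems (Besicovitch, Davies) produce compact subsets of positive \emph{finite} measure, and the Davies--Rogers measure is constructed precisely so that no such subset exists --- every positive set has measure $\infty$. The weaker statement you actually need (a positive Borel set contains a positive compact set) does not follow from anything ``classical'' here and is itself a nontrivial part of establishing properness and continuous reading of names for $P_{\Nho}$; it must be proved, not cited. Second, the inequality ``$\chi(G_n)\ge N_n$'' is backwards ($\chi$ of a graph never exceeds its number of vertices); what Lemma \ref{lem:graph}(1) gives is $\chi(G_n)>M_n$, and $N_n\ge\frac43 M_n$ is a separate consequence of (1) and (2). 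The splitting-real case analysis (uniform bit versus a node on which one bit is forced for infinitely many $n$, using $\sigma$-additivity of the ideal) is structurally sound, but it, too, sits on top of the unproved fusion machinery.
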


	Therefore, in the model obtained by countable support iteration of $P_{\Nho}$ of length $\omega_2$, we have $\aleph_1 = \frakd < \cov(\Nho)= \frakc = \aleph_2$.
	If we assume large cardinals, then $\frakr=\aleph_1$ holds in the model obtained by the above iteration using Theorem 6.3.8 of \cite{zapletal2008forcing}.
	
	Employing the following theorem due to Klausner and Mejia, we prove the consistency of $\cov(\scrE) < \non(\Nho)$.
	
	\begin{thm}[\cite{klausner2019different}]\label{thm:km}
		Let $c, H, d \in \omega^\omega$ and assume $c > H \ge^* 1$, $d \ge 2$ and $\limsup_{n \to \infty} \frac{\log_{d(n)} H(n)}{d(n)} = \infty$.
		Then there is a proper forcing poset $Q_{c,H}^d$ such that the following conditions hold:
		\begin{enumerate}
			\item $Q_{c,H}^d$ adds a slalom in $\mathcal{S}(c, H)$ that catches infinitely often every real in $\prod c$ in the ground model. In particular, the countable support iteration of $Q_{c,H}^d$ of length $\omega_2$ increases $\frakv^\exists_{c,H}$.
			\item Assume additionally the following conditions:
			\begin{enumerate}
				\item $a, e \in \omega^\omega$, $a > 0$, $e$ goes to infinity.
				\item $\prod_{k < n} a(k) \le d(n)$ and $\prod_{k < n} c^{\triangledown H}(k) \le e(n)$ for all but finitely many $n$.
				Here $c^{\triangledown H}(k)$ means $\abs{[c(k)]^{\le h(k)}}$.
				\item $\lim_{k \to \infty} \min \{ \frac{c^{\triangledown H}(k)}{e(k)}, \frac{a(k)}{d(k)} \} = 0$.
			\end{enumerate}
			Then, $Q_{c,H}^d$ is $(a, e)$-bounding, that is it forces every $x \in \prod{a}$ there is a slalom in $\mathcal{S}(a, e)$ in the ground model that catches $x$ eventually. Combining this result and Shelah's preservation theorem, we have that the countable support iteration of $Q_{c,H}^d$ of length $\omega_2$ forces $\frakc^\forall_{a,e} = \aleph_1$.
		\end{enumerate}
	\end{thm}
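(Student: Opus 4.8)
The plan is to realize $Q_{c,H}^d$ as a creature forcing in the sense of Ros{\l}anowski--Shelah, built so that its generic object is a slalom in $\mathcal{S}(c,H)$ and so that the norms are tuned by the parameter $d$. A condition would be a pair $p=(\sigma^p,\langle F^p_n : n\ge|\sigma^p|\rangle)$, where the stem $\sigma^p$ decides finitely many values $\sigma^p(i)\in[c(i)]^{\le H(i)}$ of the generic slalom, and for each $n\ge|\sigma^p|$ the creature $F^p_n$ is a nonempty family of admissible values $\subseteq[c(n)]^{\le H(n)}$ carrying a norm $\operatorname{nor}(F^p_n)$. Extension refines each creature and lengthens the stem, and one requires $\operatorname{nor}(F^p_n)\to\infty$. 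The norm would be defined so that an $F^p_n$ of large norm is \emph{$d(n)$-big}: if its members are split into at most $d(n)$ colour classes, one class again has large norm. The hypothesis $\limsup_n \log_{d(n)}H(n)/d(n)=\infty$ is exactly what guarantees that cells $[c(n)]^{\le H(n)}$ carry creatures of arbitrarily large norm infinitely often, so that the generic stem can always be extended and $\operatorname{nor}\to\infty$ stays maintainable.

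First I would verify properness together with the pure-decision/fusion apparatus: given a name for an ordinal and a condition, a fusion sequence $p_0\ge p_1\ge\cdots$ with diagonal norms tending to infinity produces a lower bound that reads the name continuously, the point being that bigness lets one decide a name into boundedly many possibilities at the cost of only a bounded amount of norm per coordinate. Next, the anti-localization property (1): fix a ground model $x\in\prod c$, a condition $p$, and $N$; since infinitely many coordinates carry creatures of large norm, at some $n>N$ one can refine $F^p_n$ to a subfamily, still of positive norm, all of whose members contain $x(n)$, thereby forcing $x(n)\in\varphi(n)$. A density argument then gives $\Vdash(\exists^\infty n)\,x(n)\in\varphi(n)$, i.e.\ the generic $\varphi\in\mathcal{S}(c,H)$ catches every ground model real infinitely often.

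For part (2), the $(a,e)$-bounding property, I would run the fusion against a name $\dot x$ for a member of $\prod a$. Along the fusion, after the stem has length $n$ the number of relevant possibilities one must split into when deciding $\dot x(n)$ is controlled by the number of histories of $\dot x\restriction n$, which is at most $\prod_{k<n}a(k)\le d(n)$; by $d(n)$-bigness each such splitting is absorbed without destroying the fusion. Enumerating, for each $n$, all values $\dot x(n)$ can take over the finitely many histories of the slalom decisions up to $n$ produces a ground model set of size at most $\prod_{k<n}c^{\triangledown H}(k)\le e(n)$; collecting these into $\psi(n)$ yields a ground model slalom $\psi\in\mathcal{S}(a,e)$ with $\Vdash(\forall^\infty n)\,\dot x(n)\in\psi(n)$. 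Condition (c), $\lim_k\min\{c^{\triangledown H}(k)/e(k),a(k)/d(k)\}=0$, is what provides the slack for the halving step that keeps the norms growing while the name is read off.

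Finally, for the iteration statements I would take a ground model of $\CH$ and form the countable support iteration of length $\omega_2$. Properness is preserved by Shelah's iteration theorem, giving the $\aleph_2$-c.c.\ and hence $\frakc=\aleph_2$ in the extension; each iterand adds a slalom catching all reals of the intermediate model infinitely often, and a reflection argument (any family of $\aleph_1$ reals of $\prod c$ appears at some stage $<\omega_2$ and is caught by a later generic slalom) shows no family of size $\aleph_1$ witnesses $\frakv^\exists_{c,H}$, so $\frakv^\exists_{c,H}=\aleph_2$. For $\frakc^\forall_{a,e}=\aleph_1$, the $(a,e)$-bounding property must be shown to be preserved along the countable support iteration, which is precisely where Shelah's preservation theorem for this kind of bounding is invoked; preservation yields that every $x\in\prod a$ of the final model is caught eventually by some slalom of the ground model, and $\CH$ leaves only $\aleph_1$ such slaloms in $\mathcal{S}(a,e)$, whence $\frakc^\forall_{a,e}=\aleph_1$. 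The main obstacle I expect is part (2): designing a single norm that is simultaneously $d(n)$-big and supports halving, so that reading off a $\prod a$-name enlarges the localizing slalom only up to the budget $e(n)$, and then checking that this property fits the hypotheses of Shelah's preservation theorem so that it survives the $\omega_2$-iteration.
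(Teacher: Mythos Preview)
The paper does not prove this theorem at all: it is stated with the attribution \cite{klausner2019different} and used as a black box to derive the consistency of $\cov(\scrE)<\non(\Nho)$. There is therefore no proof in the paper for your attempt to be compared against.

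That said, your sketch is a reasonable high-level outline of the creature-forcing construction that Klausner and Mej\'ia actually carry out: conditions consisting of a stem in $\prod_{i<n}[c(i)]^{\le H(i)}$ together with creatures at later coordinates, a logarithmic norm calibrated by $d$ so that $d(n)$-fold splittings cost only a bounded amount of norm, fusion with halving for properness and continuous reading of names, and then the counting argument that turns continuous reading of a $\prod a$-name into a ground-model slalom in $\mathcal{S}(a,e)$. Two places where your sketch is thinner than the actual argument are worth flagging. First, you never write down the norm; in the Klausner--Mej\'ia construction the norm is essentially $\log_{d(n)}$ of a suitable cardinality, and the hypothesis $\limsup_n \log_{d(n)} H(n)/d(n)=\infty$ is used precisely to guarantee that creatures of arbitrarily large norm exist at infinitely many coordinates \emph{and} that halving is possible. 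Second, your explanation of why condition~(c) is needed is off: the quantity $\min\{c^{\triangledown H}(k)/e(k),\,a(k)/d(k)\}\to 0$ is not about ``slack for halving'' but about arranging that, along the fusion, at each level one of the two tasks (absorbing the $a$-splitting into bigness, or bounding the number of possible histories by $e$) is cheap enough to perform without destroying the norm growth. If you were to write this proof out in full you would need to make both of these points precise; as it stands your outline captures the architecture but not the combinatorics that make it work.
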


	\begin{lem}\label{lem:covEcallae}
		If $a, e \in \omega^\omega$ with $0 < e \le a$ satisfies $\prod_{n \in \omega} \frac{e(n)}{a(n)} = 0$, then $\cov(\scrE) \le \frakc^\forall_{a,e}$.
	\end{lem}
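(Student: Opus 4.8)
The plan is to cover $2^\omega$ by $\frakc^\forall_{a,e}$ many $F_\sigma$ Lebesgue-null sets. This suffices: if $\bigcup_k F_k$ is null with each $F_k$ closed, then each $F_k$, being a subset of a null set, is a closed null set, so every $F_\sigma$ null set lies in $\scrE$.

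\textbf{Step 1 (regrouping to a summable configuration).} From $0<e\le a$ and $\prod_n e(n)/a(n)=0$ one gets $\sum_n\log(a(n)/e(n))=\infty$, so $\omega$ can be partitioned into consecutive finite blocks $\seq{B_k:k\in\omega}$ with $\prod_{n\in B_k}e(n)/a(n)\le 2^{-k-2}$. Put $A_k=\prod_{n\in B_k}a(n)$ and $E_k=\prod_{n\in B_k}e(n)$; then $A_k\ge 2$, $E_k/A_k\le 2^{-k-2}$, and in particular $\sum_k E_k/A_k<\infty$. I regard this as the heart of the matter: one cannot embed $\prod_n a(n)$ into $2^\omega$ coordinate by coordinate while controlling the null ideal, because when $a(n)$ is not a power of $2$ the rounding error is a constant multiplicative factor at each coordinate, which $\prod_n e(n)/a(n)=0$ need not absorb; after regrouping into blocks with geometrically small $E_k/A_k$ this loss becomes harmless.

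\textbf{Step 2 (embedding into $2^\omega$).} Choose pairwise disjoint finite intervals $I_k\subseteq\omega$ with $2^{|I_k|}\ge 2^kA_k$, and for each $k$ a surjection $\pi_k\colon 2^{I_k}\to\prod_{n\in B_k}a(n)$ all of whose fibres have size at most $2^{|I_k|}/A_k+1$ (split $2^{|I_k|}$ into $A_k$ consecutive near-equal pieces). For $z\in 2^\omega$ define $x_z\in\prod_n a(n)$ by $x_z\restriction B_k=\pi_k(z\restriction I_k)$. Now let $S\subseteq S(a,e)$ witness $\frakc^\forall_{a,e}$; for $\varphi\in S$ set $\psi_\varphi(k)=\prod_{n\in B_k}\varphi(n)$, so $|\psi_\varphi(k)|\le E_k$, and
\[
D_\varphi=\{\,z\in 2^\omega:(\forall^\infty k)\ \pi_k(z\restriction I_k)\in\psi_\varphi(k)\,\}=\bigcup_{m}\{\,z:(\forall k\ge m)\ \pi_k(z\restriction I_k)\in\psi_\varphi(k)\,\}.
\]

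\textbf{Step 3 (the two checks).} For $\scrE$-smallness: each set in the displayed union is closed, and by independence of the coordinate blocks its Lebesgue measure equals $\prod_{k\ge m}|\pi_k^{-1}(\psi_\varphi(k))|/2^{|I_k|}\le\prod_{k\ge m}(E_k/A_k+E_k/2^{|I_k|})\le\prod_{k\ge m}2^{-k-1}=0$, so $D_\varphi$ is $F_\sigma$ and null, hence $D_\varphi\in\scrE$. For covering: given $z$, pick $\varphi\in S$ with $(\forall^\infty n)\ x_z(n)\in\varphi(n)$; since $\min B_k\to\infty$, for all but finitely many $k$ every $n\in B_k$ satisfies $x_z(n)\in\varphi(n)$, i.e.\ $\pi_k(z\restriction I_k)=x_z\restriction B_k\in\psi_\varphi(k)$, so $z\in D_\varphi$. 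Hence $2^\omega=\bigcup_{\varphi\in S}D_\varphi$ and $\cov(\scrE)\le|S|=\frakc^\forall_{a,e}$ (no degenerate finite-cover case arises, since $e(n)<a(n)$ infinitely often and a round-robin diagonalization shows $\frakc^\forall_{a,e}$ is infinite). The only estimate needing care is the measure computation in Step 3 — verifying that the slack $2^{|I_k|}\ge 2^kA_k$ together with $E_k/A_k\le 2^{-k-2}$ really forces the product of the per-block factors down to $0$; everything else is routine bookkeeping.
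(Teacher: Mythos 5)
Your proof is correct, and it reaches the same key object as the paper — the ``eventually captured'' set attached to a slalom, $\{x : (\forall^\infty n)\, x(n)\in\varphi(n)\}$, written as an increasing union of closed null pieces — but it packages the argument differently. The paper works entirely inside $\prod a$ with the product $\mu$ of the uniform measures, defines the identity together with $\psi(S)=\{x\in\prod a:(\forall^\infty n)\ x(n)\in S(n)\}$ as a two-line Tukey morphism into $(\prod a,\mathcal{S}(a,e),\in^*)$, and then appeals to the invariance fact $\cov(\scrE)=\cov(\scrE_\mu)$ to transfer the conclusion back to $2^\omega$, where $\scrE$ is officially defined. You instead make that transfer explicit: the block regrouping of Step 1 (so that $E_k/A_k\le 2^{-k-2}$) and the near-uniform surjections $\pi_k\colon 2^{I_k}\to\prod_{n\in B_k}a(n)$ of Step 2 are exactly what is needed to simulate the $\mu$-null $F_\sigma$ sets by Lebesgue-null $F_\sigma$ sets in $2^\omega$, absorbing the rounding error caused by non-dyadic $a(n)$. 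What you gain is a self-contained proof that does not rely on the (true but uncited) invariance of $\cov(\scrE)$ under change of the underlying measured Polish space; what you pay is the bookkeeping of Steps 1--3, all of which checks out (the per-block factor $E_k/A_k+E_k/2^{\abs{I_k}}\le 2^{-k-1}$ does force each closed piece to be null, and the covering argument is sound since every exceptional coordinate lies in only one block). The closing remark about $\frakc^\forall_{a,e}$ being infinite is harmless but unnecessary: you exhibit a covering family of $\scrE$-sets of size $\abs{S}$, which gives $\cov(\scrE)\le\abs{S}$ regardless.
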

	\begin{proof}
		Consider the uniform measure $\mu_n$ on $a(n)$ for every $n$ and $\mu$ the product measure on $\prod a$.
		Since $\cov(\scrE) = \cov(\scrE_{\mu})$ holds, it suffices to show that there is Tukey morphism $(\varphi, \psi) \colon (\prod a, \scrE_\mu, \in) \to (\prod a, \mathcal{S}(a, e), \in^*)$, that is
		\begin{align*}
		&\text{there is $\varphi \colon \prod a \to \prod a$ and $\psi \colon \mathcal{S}(a, e) \to \scrE_\mu$ such that} \\
		&\hspace{1cm}\text{$\varphi(x) \in^* S$ implies $x \in \psi(S)$ for every $x \in \prod a$ and $S \in \mathcal{S}(a, e)$}.
		\end{align*}
		Letting $\varphi(x) = x$ and $\psi(S) = \{ x \in \prod a : (\forall^\infty n)\ x(n) \in S(n) \}$ for $x \in \prod a$ and $S \in \mathcal{S}(a, e)$ suffices.
		Note that $\mu(\psi(S)) = 0$ for every $S$ by using the assumption of this lemma.
	\end{proof}

	In the following lemma, the sequence $\seq{M_n, N_n : n \in \omega}$ is the one fixed when $\Nho$ is defined.

	\begin{lem}\label{lem:vexistschnonnho}
		Let $c, H \in \omega^\omega$ and $\seq{I_n : n \in \omega}$ be an interval partition.
		Assume $c(n) = \prod_{k \in I_n} N_k$ and $M_{\min{I_{n+1}}} > 2^n H(n) N_0 \dots N_{\min I_n - 1}$ for every $n$.
		Then there is a Tukey morphism from $(\Omega, \Nho, \in)$ to $(\prod c, \mathcal{S}(c, H), \in^\infty)$. In particular $\cov(\Nho) \le \frakc^\exists_{c,H}$ and $\frakv^\exists_{c,H} \le \non(\Nho)$ hold.
	\end{lem}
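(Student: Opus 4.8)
The plan is to build the Tukey morphism $(\varphi,\psi)$ essentially by hand: $\varphi$ will be (a coding of) the identity, and $\psi$ a $\limsup$‑construction in the spirit of the one used earlier in this paper for $\mathcal{N}^f_X\preceq_{\mathrm T}\nul$. Once we check that $\psi(T)\in\Nho$ and that $\varphi(y)\in^\infty T\Rightarrow y\in\psi(T)$, the two displayed inequalities are the standard consequences of a Tukey morphism $\mathbf A\to\mathbf B$, namely $\mathfrak d(\mathbf A)\le\mathfrak d(\mathbf B)$ and $\mathfrak b(\mathbf B)\le\mathfrak b(\mathbf A)$, applied to $\mathbf A=(\Omega,\Nho,\in)$ and $\mathbf B=(\prod c,\mathcal S(c,H),\in^\infty)$ (here $\mathfrak d(\mathbf A)=\cov(\Nho)$, $\mathfrak b(\mathbf A)=\non(\Nho)$, $\mathfrak d(\mathbf B)=\frakc^\exists_{c,H}$, $\mathfrak b(\mathbf B)=\frakv^\exists_{c,H}$).

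Since $c(n)=\prod_{k\in I_n}N_k=\bigl|\prod_{k\in I_n}G_k\bigr|$, fix for each $n$ a bijection between $c(n)$ and $\prod_{k\in I_n}G_k$ and define $\varphi\colon\Omega\to\prod c$ by letting $\varphi(y)(n)$ code $y\restriction I_n$. For $T\in\mathcal S(c,H)$ put
\[
\psi(T)=\{y\in\Omega:(\exists^\infty n)\ \varphi(y)(n)\in T(n)\}=\bigcap_{m\in\omega}\bigcup_{n\ge m}E_n,\qquad E_n=\{y\in\Omega:y\restriction I_n\in T(n)\},
\]
where on the right $T(n)$ is read as a subset of $\prod_{k\in I_n}G_k$. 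Then $\varphi(y)\in^\infty T$ holds exactly when $y\in\psi(T)$, so the morphism condition is automatic, and everything reduces to showing $\psi(T)\in\Nho$.

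For that I would estimate the $1$-approximation $\Haus^h_{\Omega,1}(E_n)$. Write $E_n=\bigcup_{s\in T(n)}[s]$ with $[s]=\{y:y\restriction I_n=s\}$. Fixing in addition the (genuinely free) coordinates below $\min I_n$ expresses $[s]$ as a union of at most $N_0\cdots N_{\min I_n-1}$ cylinders, each of which fixes all coordinates in $[0,\max I_n]$; any two points of such a cylinder first differ at a coordinate $\ge\max I_n+1$, so the cylinder has diameter $\le 2^{-\max I_n}$ and hence $h$-measure $\le h(2^{-\max I_n})=\tfrac{1}{M_0\cdots M_{\max I_n}}$. Therefore
\[
\Haus^h_{\Omega,1}(E_n)\ \le\ \frac{N_0\cdots N_{\min I_n-1}\cdot|T(n)|}{M_0\cdots M_{\max I_n}}\ \le\ \frac{N_0\cdots N_{\min I_n-1}\cdot H(n)}{M_0\cdots M_{\max I_n}} .
\]
Using $M_0\cdots M_j=2^{\,j}N_0\cdots N_{j-1}$ together with $\max I_n=\min I_{n+1}-1$, the hypothesis relating the $M$'s, $N$'s and $H(n)$ is exactly calibrated so that the right‑hand side is bounded by a summable sequence in $n$ (e.g.\ $2^{-n}$), whence $\sum_n\Haus^h_{\Omega,1}(E_n)<\infty$.

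Finally, for each $m$ we have $\Haus^h_{\Omega,1}(\psi(T))\le\Haus^h_{\Omega,1}\!\bigl(\bigcup_{n\ge m}E_n\bigr)\le\sum_{n\ge m}\Haus^h_{\Omega,1}(E_n)$, which tends to $0$ as $m\to\infty$; hence $\Haus^h_{\Omega,1}(\psi(T))=0$, and therefore $\Haus^h_\Omega(\psi(T))=0$ by Remark~\ref{rmk:deltaapprox}, i.e.\ $\psi(T)\in\Nho$. I expect the only real difficulty to be bookkeeping: getting the covering count of $E_n$ exactly right (the coordinates below $\min I_n$ and above $\max I_n$ are free, those in $I_n$ are pinned to an element of $T(n)$) and then matching this against the stated inequality so that the geometric series converges. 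It is important that one works throughout with the $\delta=1$ approximation $\Haus^h_{\Omega,1}$ and only passes to $\Haus^h_\Omega$ at the very end via Remark~\ref{rmk:deltaapprox}, since the individual sets $E_n$ may well have infinite $\Haus^h_\Omega$-measure.
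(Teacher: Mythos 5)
Your construction of $(\varphi,\psi)$, the reformulation of $\psi(T)$ as a $\limsup$ of the sets $E_n$, and the reduction of everything to $\sum_n\Haus^h_{\Omega,1}(E_n)<\infty$ is exactly the paper's argument, and your covering count (at most $N_0\cdots N_{\min I_n-1}\cdot H(n)$ cylinders pinning all coordinates up to $\max I_n$, each of diameter $2^{-\max I_n}$ and hence of $h$-value $\tfrac{1}{M_0\cdots M_{\max I_n}}$) is done more carefully than in the paper. The problem is precisely the step you deferred as ``bookkeeping'': it does not close as stated. Your estimate reads
\[
\Haus^h_{\Omega,1}(E_n)\ \le\ \frac{H(n)\,N_0\cdots N_{\min I_n-1}}{M_0\cdots M_{\max I_n}},
\]
whereas the hypothesis is a lower bound on $M_{\min I_{n+1}}=M_{\max I_n+1}$, an index that does \emph{not} occur in your denominator. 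Writing $M_0\cdots M_{\max I_n}=2^{\max I_n}N_0\cdots N_{\max I_n-1}$, the inequality you need, namely $2^nH(n)N_0\cdots N_{\min I_n-1}\le 2^{\max I_n}N_0\cdots N_{\max I_n-1}$, is not implied by $2^nH(n)N_0\cdots N_{\min I_n-1}<M_{\min I_{n+1}}=2N_{\max I_n}$: the sizes $N_k$ are only bounded from below by the construction, so the single factor $N_{\max I_n}$ may dwarf the entire product $2^{\max I_n}N_0\cdots N_{\max I_n-1}$, and a lower bound on it gives no lower bound on that product. So the comparison with $2^{-n}$, and hence the convergence of the series, is not established.

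For contrast, the paper asserts the per-block estimate with denominator $M_0\cdots M_{\min I_{n+1}}$ --- one factor more than yours --- and with that denominator the hypothesis is exactly what is needed, since $\frac{H(n)N_0\cdots N_{\min I_n-1}}{M_0\cdots M_{\min I_{n+1}}}\le\frac{H(n)N_0\cdots N_{\min I_n-1}}{M_{\min I_{n+1}}}<2^{-n}$. To reach that stronger estimate one must cover $E_n$ by sets of diameter $2^{-\min I_{n+1}}$ rather than $2^{-\max I_n}$ (e.g.\ standard sets of rank $\min I_{n+1}$, which additionally pin coordinate $\min I_{n+1}$ to an independent subset of $G_{\min I_{n+1}}$) without inflating the count; your cylinders leave coordinate $\min I_{n+1}$ entirely free and therefore cannot have diameter below $2^{-\max I_n}$. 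So either you must produce this finer cover and verify the counting, or the hypothesis must be reinterpreted as a bound involving $M_0\cdots M_{\max I_n}$; as written, the last line of your argument is a genuine gap, and it is the only point where the specific quantitative hypothesis of the lemma is actually used.
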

	\begin{proof}
		Write $I_n = [i_n, i_{n+1})$ for $n \in \omega$.
		Define $\varphi \colon \Omega \to \prod c$ and $\psi \colon \mathcal{S}(c, H) \to \Nho$ as follows:
		\begin{align*}
			\varphi(x)  &= x, \\
			\psi(S) &= \{ x \in \Omega : (\exists^\infty n)\ (x \restrict I_n \in S(n)) \}.
		\end{align*}
		We must show that $\Haus^h_\Omega(\psi(S)) = 0$ for every $S$.
		But by the assumption, we have
		\[
		\frac{H(n) N_0 \dots N_{i_n - 1}}{M_0 \dots M_{i_{n+1}}} \le 2^{-n}.
		\]
		Thus, we have
		\[
		\sum_{n \in \omega} \frac{H(n) N_0 \dots N_{i_n - 1}}{M_0 \dots M_{i_{n+1}}} < \infty.
		\]
		On the other hand, we have $\Haus^h_{\Omega,\infty}(\{ x : x \restrict I_n \in S(n) \}) \le \frac{H(n) N_0 \dots N_{i_n - 1}}{M_0 \dots M_{i_{n+1}}}$. Therefore, we have $\Haus^h_\Omega(\psi(S)) = 0$.
	\end{proof}

	\begin{thm}
		It is consistent that $\cov(\scrE) < \non(\Nho)$.
	\end{thm}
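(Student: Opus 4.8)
The plan is to combine Theorem~\ref{thm:km} with Lemma~\ref{lem:vexistschnonnho} and Lemma~\ref{lem:covEcallae}. Concretely, over a ground model of $\CH$ I would choose parameters $\seq{I_n : n \in \omega}$ (an interval partition of $\omega$) and $c, H, d, a, e \in \omega^\omega$ so that the hypotheses of all three results hold for them, and then pass to the model $V^\P$ obtained by the countable support iteration $\P$ of $Q_{c,H}^d$ of length $\omega_2$. In $V^\P$, Theorem~\ref{thm:km}(1) gives $\aleph_2 \le \frakv^\exists_{c,H}$ while Theorem~\ref{thm:km}(2) gives $\frakc^\forall_{a,e} = \aleph_1$, so Lemma~\ref{lem:covEcallae} and Lemma~\ref{lem:vexistschnonnho} yield
\[
\cov(\scrE) \le \frakc^\forall_{a,e} = \aleph_1 < \aleph_2 \le \frakv^\exists_{c,H} \le \non(\Nho),
\]
which is the desired conclusion.

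The parameters would be built by a single recursion on $n$, arranged so that nothing is circular. Suppose $i_k := \min I_k$ has been fixed for $k \le n$ (so $I_0, \dots, I_{n-1}$ are determined) and $c(k), H(k)$ are defined for $k < n$; write $c^{\triangledown H}(k) = \abs{[c(k)]^{\le H(k)}}$. First I would set
\[
e(n) = \prod_{k < n} c^{\triangledown H}(k), \qquad a(n) = 2\,e(n), \qquad d(n) = a(n)\prod_{k \le n} a(k), \qquad H(n) = d(n)^{\,n\,d(n)},
\]
each of which depends only on data from stages $< n$. Then I would pick $i_{n+1} > i_n$ large enough that $\prod_{k \in [i_n, i_{n+1})} N_k > H(n)$ and $M_{i_{n+1}} > 2^n H(n) \prod_{k < i_n} N_k$; such $i_{n+1}$ exists because $\prod_{k \in [i_n, j)} N_k \to \infty$ and $M_j \to \infty$ as $j \to \infty$ (here $N_k \ge 2$ and $M_j$ grows at least geometrically). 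Finally I would set $c(n) = \prod_{k \in [i_n, i_{n+1})} N_k$, which determines $c^{\triangledown H}(n)$, and move on to stage $n+1$.

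Next I would verify the three clusters of hypotheses, which by construction are routine. Lemma~\ref{lem:vexistschnonnho} needs $c(n) = \prod_{k \in I_n} N_k$ and $M_{\min I_{n+1}} > 2^n H(n) N_0 \cdots N_{\min I_n - 1}$, both built in, so $\frakv^\exists_{c,H} \le \non(\Nho)$. Theorem~\ref{thm:km} needs $c > H \ge 1$ (we have $c(n) > H(n) \ge 1$), $d \ge 2$ (clear, since $a(k) \ge 2$), and $\limsup_n \log_{d(n)} H(n)/d(n) = \infty$, which holds because $\log_{d(n)} H(n)/d(n) = n$; its part~(2) additionally needs $a > 0$, $e \to \infty$ (true since each $c^{\triangledown H}(k) \ge 2$), $\prod_{k < n} a(k) \le d(n)$ and $\prod_{k < n} c^{\triangledown H}(k) \le e(n)$ (both by the definitions of $d(n)$ and $e(n)$), and $\lim_k \min\{ c^{\triangledown H}(k)/e(k),\, a(k)/d(k)\} = 0$, which follows from $a(k)/d(k) = 1/\prod_{j \le k} a(j) \to 0$. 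Lemma~\ref{lem:covEcallae} needs $0 < e \le a$ and $\prod_n e(n)/a(n) = 0$, and here $e(n)/a(n) = \tfrac12$.

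The main obstacle is precisely the tension inside this parameter choice. Theorem~\ref{thm:km} forces $H$ to be astronomically larger than $d$, and $d$ in turn must dominate the products of $a$, hence of $e$, hence of the $c^{\triangledown H}$'s; meanwhile the Davies--Rogers growth condition in Lemma~\ref{lem:vexistschnonnho} caps $H(n)$ from above in terms of $M_{\min I_{n+1}}$, and $H < c = \prod_{k \in I_n} N_k$ caps it again. What makes everything fit is that $e(n), a(n), d(n), H(n)$ are concrete numbers fixed by the earlier stages, whereas $\min I_{n+1}$ is still free; enlarging it makes both $M_{\min I_{n+1}}$ and $c(n)$ exceed any prescribed value, so the two upper caps on $H$ never actually bind. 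Once the recursion is set up in this order, there is no circularity and each cited result applies, giving the claimed consistency.
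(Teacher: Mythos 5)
Your proposal is correct and follows essentially the same route as the paper: it combines Theorem~\ref{thm:km}, Lemma~\ref{lem:covEcallae} and Lemma~\ref{lem:vexistschnonnho} via a recursive choice of $c, H, d, a, e$ and the interval partition, then forces with the countable support iteration of $Q_{c,H}^d$ of length $\omega_2$. Your exact formulas for $e(n)$ and $d(n)$ differ slightly from the paper's (e.g.\ $e(n)=\prod_{k<n}c^{\triangledown H}(k)$ and $d(n)=a(n)\prod_{k\le n}a(k)$ rather than $e(n+1)=2\prod_{k\le n}2^{c(k)}$ and $d(n+1)=\prod_{k\le n}a(k)$), but they satisfy all the required hypotheses --- indeed your explicit check that $a(k)/d(k)=1/\prod_{j\le k}a(j)\to 0$ verifies condition (2)(c) of Theorem~\ref{thm:km} more transparently than the paper does.
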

	\begin{proof}
		By induction, we define $c, H, d, a, e \in \omega^\omega$ and an interval partition $\seq{I_n : n \in \omega}$ as follows:
		\begin{enumerate}
			\item Define $d(0) = 2, e(0) = 1$.
			\item Define $H(n) = (d(n))^{n d(n)}$.
			\item Pick $\max I_n$ so that $M_{\min{I_{n+1}}} > 2^n H(n) N_0 \dots N_{\min I_n - 1}$ and $\prod_{k \in I_n} N_k > H(n)$.
			\item Define $c(n) = \prod_{k \in I_n} N_k$.
			\item Define $a(n) = 2 e(n)$.
			\item Define $e(n+1) = 2 \prod_{k \le n} 2^{c(k)}$.
			\item Define $d(n+1) = \prod_{k \le n} a(k)$.
		\end{enumerate}
		Then, we have all assumptions of Theorem \ref{thm:km}, Lemma \ref{lem:covEcallae} and Lemma \ref{lem:vexistschnonnho}.
		Therefore, the countable support iteration of $Q_{c,H}^d$ of length $\omega_2$ forces
		\[
		\aleph_1 = \cov(\scrE)\ = \frakc^\forall_{a,e} < \aleph_2 = \frakv^\exists_{c,H} = \non(\Nho) = \frakc. \qedhere
		\]
	\end{proof}

	Lemma \ref{lem:vexistschnonnho} has another corollary:
	
	\begin{cor}
		In Mathias model, $\cov(\Nho) = \aleph_1$ holds.
		In particular, it is consistent that $\aleph_1 = \cov(\Nho) < \frakb = \fraks = \frakc = \aleph_2$.
	\end{cor}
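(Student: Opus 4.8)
The plan is to feed suitable parameters into Lemma~\ref{lem:vexistschnonnho} and to exploit the fact that Mathias forcing $\mathbb{M}$ has the Laver property, a property preserved under countable support iteration of proper forcings. Recall that the Mathias model is $V[G_{\omega_2}]$, where $V\models\CH$ and $G_{\omega_2}$ is generic for the length-$\omega_2$ countable support iteration $P_{\omega_2}$ of $\mathbb{M}$; in particular $P_{\omega_2}$ has the Laver property over $V$. The key observation is that $\frakc^\exists_{c,H}=\aleph_1$ holds in $V[G_{\omega_2}]$ as soon as $H$ majorizes the fixed width function of the Laver property (say $H(n)\ge 2^n$): every $x\in\prod c$ in $V[G_{\omega_2}]$ is then trapped everywhere by some ground-model slalom of width $\le H$, and there are only $\frakc^V=\aleph_1$ such slaloms.

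First I would fix the parameters. Set $H(n)=2^n$, and recursively build an interval partition $\seq{I_n:n\in\omega}$ with $i_n:=\min I_n$ such that $M_{i_{n+1}}>2^n H(n)N_0\cdots N_{i_n-1}$ for every $n$; this is possible because $M_k\to\infty$. Put $c(n)=\prod_{k\in I_n}N_k$. Then $c$, $H$ and $\seq{I_n}$ satisfy the hypotheses of Lemma~\ref{lem:vexistschnonnho}, so $\cov(\Nho)\le\frakc^\exists_{c,H}$ holds in any extension, in particular in $V[G_{\omega_2}]$.

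Next I would bound $\frakc^\exists_{c,H}$ from above in $V[G_{\omega_2}]$. Given $x\in\prod c$ there, the function $c\in V$ bounds $x$ pointwise, so by the Laver property of $P_{\omega_2}$ over $V$ there is a slalom $\varphi\in V$ with $\abs{\varphi(n)}\le 2^n$ and $x(n)\in\varphi(n)$ for all $n$; replacing $\varphi(n)$ by $\varphi(n)\cap c(n)$ we may assume $\varphi(n)\subset c(n)$, so $\varphi\in S(c,H)$ and trivially $x\in^\infty\varphi$. Hence $S(c,H)\cap V$ witnesses $\frakc^\exists_{c,H}\le\abs{S(c,H)\cap V}\le\frakc^V=\aleph_1$ (using $\CH$ in $V$; this set remains of size $\le\aleph_1$ in $V[G_{\omega_2}]$ by properness). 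Therefore $\cov(\Nho)=\aleph_1$ in $V[G_{\omega_2}]$, the lower bound $\aleph_1\le\cov(\Nho)$ being automatic since $\Nho$ is a proper $\sigma$-ideal on the uncountable Polish space $\Omega$ containing the singletons.

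It remains to note that $\frakb=\fraks=\frakc=\aleph_2$ in $V[G_{\omega_2}]$. One has $\frakc=\aleph_2$ by the standard count for countable support iterations of proper posets of size $\le\frakc$ over a model of $\CH$. For $\frakb$ and $\fraks$: any family of $\aleph_1$ reals in $V[G_{\omega_2}]$ lies in some $V[G_\beta]$ with $\beta<\omega_2$ (by properness and the $\aleph_2$-chain condition, using $\cf(\omega_2)>\omega_1$), and the Mathias real $m$ added at stage $\beta$ dominates every function of $V[G_\beta]$ and satisfies $m\subset^* A$ or $m\subset^*(\omega\setminus A)$ for every $A\in[\omega]^\omega\cap V[G_\beta]$; hence no such family is unbounded and none is splitting. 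Combining, $\aleph_1=\cov(\Nho)<\frakb=\fraks=\frakc=\aleph_2$ in the Mathias model. The only delicate point is the matching of widths in the first two steps: I must make sure the freedom in choosing the interval partition $\seq{I_n}$ allows $H$ to be taken as large as the Laver-property width function requires, which it does, since making the $I_n$ longer only forces $M_{i_{n+1}}$ to outgrow $N_0\cdots N_{i_n-1}$ by a larger factor.
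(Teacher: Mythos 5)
Your proposal is correct and follows essentially the same route as the paper: instantiate Lemma~\ref{lem:vexistschnonnho} with suitable $c$, $H$ and an interval partition to get $\cov(\Nho)\le\frakc^\exists_{c,H}$, then use the Laver property of the Mathias iteration (over a ground model of $\CH$) to see that ground-model slaloms witness $\frakc^\forall_{c,H}\le\aleph_1$, hence $\frakc^\exists_{c,H}\le\aleph_1$. The paper's proof is just a terser version of this; your added details (the explicit choice of parameters, the width-matching remark, and the standard verification that $\frakb=\fraks=\frakc=\aleph_2$) are all correct fillings of what the paper leaves implicit.
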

	\begin{proof}
		By the Laver property, in Mathias model, $\frakc^\forall_{c,H}$ is small. 
		In particular $\frakc^\exists_{c,H}$ is also small. 
		Thus this corollary follows from Lemma \ref{lem:vexistschnonnho}.
	\end{proof}

	\begin{thm}
		Every finite support iteration of $\sigma$-centered forcings does not add $\Nho$-random real.
	\end{thm}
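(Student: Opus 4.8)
The plan is to mimic the classical argument that $\sigma$-centered forcings add no random real, using the standard-set description of $\Nho$-null sets in place of the combinatorial description of the null ideal.

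\medskip

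\noindent\textbf{Reduction to a single step.} First I would reduce to the statement ``a $\sigma$-centered forcing adds no $\Nho$-random real''. If $Q_0$ and $\dot Q_1$ are $\sigma$-centered (say $Q_0=\bigcup_n A_n$ and $\forces \dot Q_1=\bigcup_m \dot B_m$ with the pieces centered), then $Q_0 \ast \dot Q_1=\bigcup_{n,m}C_{n,m}$ where $(p,\dot q)\in C_{n,m}$ iff $p\in A_n$ and $p\forces \dot q\in\dot B_m$; each $C_{n,m}$ is centered (take a common lower bound of the first coordinates, then mix a common lower bound of the second coordinates below it), regardless of the sizes of $Q_0,\dot Q_1$. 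Iterating this through finite and then countable length shows that a finite support iteration of $\sigma$-centered forcings supported on countably many coordinates is $\sigma$-centered. Now a name for an $\Nho$-random real over $V$ in an arbitrary finite support iteration $P$ of $\sigma$-centered forcings has a nice name supported on a countable set of coordinates; enlarging this to a countable closed set $\bar S$, the forcing $P\restrict\bar S$ is such a countable iteration, hence $\sigma$-centered, and ``being $\Nho$-random over $V$'' is absolute between $V[G\restrict\bar S]$ and $V[G]$ (it is just ``$r\notin N$ for every Borel $\Nho$-null $N$ coded in $V$''). So it suffices to treat a single $\sigma$-centered $P$.

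\medskip

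\noindent\textbf{Setup for the single step.} Write $P=\bigcup_n P_n$ with the $P_n$ centered and increasing, and suppose toward a contradiction that some $p$ forces that $\dot r$ is $\Nho$-random over $V$; working below $p$ we may assume $p=\mathbf 1\in P_0$. Since $\Omega$ is an ultrametric space, Assumption($\ast$) holds for $(\Omega,\rho)$ and $h$ with the countable family of standard sets (Lemma and its Corollary), so, using also Lemma~\ref{lem:standardset}, a set $A\subset\Omega$ is $\Nho$-null if and only if there is a sequence $\seq{S_\ell:\ell\in\omega}$ of standard sets with $A\subset\bigcap_m\bigcup_{\ell\ge m}S_\ell$ and $\sum_\ell h(\diam S_\ell)<\infty$. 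In particular every singleton is $\Nho$-null, since $\{x\}$ is covered by the rank-$j$ standard sets $\{y:y\restrict(j+1)=x\restrict(j+1)\}$ whose $h$-masses $1/(M_0\cdots M_j)$ tend to $0$. For each $n$ let $\beta_n=\bigcup\{t:\exists q\in P_n\ q\forces t\sqsubseteq\dot r\}$; by centeredness of $P_n$ the set on the right is a chain, so $\beta_n$ is a single (finite or infinite) sequence in $\prod_i G_i$, and if $\beta_n$ is infinite then $\beta_n\in\Omega\cap V$, whence $\mathbf 1\forces\dot r\neq\beta_n$.

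\medskip

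\noindent\textbf{The core.} For a generic $G$ define $\tilde n(m)$ to be the least $n$ with $\mathrm{lh}(\beta_n)>m$ and $\beta_n\restrict(m+1)=\dot r[G]\restrict(m+1)$. Since the set of conditions deciding $\dot r\restrict(m+1)$ is dense, $\tilde n(m)$ is well defined; it is non-decreasing in $m$; it is unbounded, because otherwise $\dot r[G]$ would equal some $\beta_{n^\ast}\in\Omega\cap V$, contradicting $\Nho$-randomness; and $\tilde n(m)$ is computed in $V$ from $\dot r[G]\restrict(m+1)$, so there is $b\in\omega^\omega\cap V$ with $\tilde n(m)\le b(m)$. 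The aim is to build, in $V$, a family $\mathcal F$ of standard sets with $\sum_{S\in\mathcal F}h(\diam S)<\infty$ which $\dot r$ is forced to meet infinitely often, contradicting $\Nho$-randomness. For each $m$ the generic $\dot r[G]$ lies in the rank-$m$ standard set $\{x:x\restrict(m+1)=\beta_{\tilde n(m)}\restrict(m+1)\}$, and these are pairwise distinct (distinct ranks); so it is enough to put into $\mathcal F$ enough rank-$m$ standard sets to capture all the candidates $\beta_n\restrict(m+1)$ with $n\le b(m)$ — fixing the first $m$ coordinates according to $\beta_n$ and the $m$-th coordinate inside a member of a small $V$-fixed cover of $G_m$ by independent sets (such a cover of size $O(\log N_m)$ exists by the same cap computation that underlies Lemma~\ref{lem:graph}).

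\medskip

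\noindent\textbf{Main obstacle.} The delicate point is arranging $\sum_{S\in\mathcal F}h(\diam S)<\infty$. Each rank-$m$ standard set has $h$-mass $1/(M_0\cdots M_m)=2^{-m}/(N_0\cdots N_{m-1})\le 4^{-m}$, so the contributions decay geometrically in the rank; but naively one includes about $b(m)$ rank-$m$ sets, and $b$ may grow fast. Controlling this — i.e.\ limiting how deep in the decomposition $\dot r\restrict(m+1)$ must be committed before $\mathcal F$ can ``see'' it — is where the real work lies: I expect one must thin $\mathcal F$ along a suitable $V$-set of ranks and replace the single global contradiction by a fusion-style construction of a condition witnessing it, spending only finitely much $h$-mass per block. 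This bookkeeping, together with the exact identity $N_0\cdots N_{n-1}/(M_0\cdots M_n)=2^{-n}$ coming from $M_{n+1}=2N_n$, is the technical heart of the proof.
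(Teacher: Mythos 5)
Your reduction to a single $\sigma$-centered step is a reasonable alternative to what the paper actually does (the paper instead sets up a Polish relational system $R=(D,\Omega,\sqsubseteq^{\Nho})$ with $D$ the set of mass-$\le 1$ sequences of standard sets and $\frakb(R)=\cov(\Nho)$, proves that every $\sigma$-centered forcing is \emph{good} for $R$, and lets the standard preservation theorem for finite support iterations handle arbitrary lengths). But your single-step argument has a genuine gap at its core, and it is not the bookkeeping issue you describe in the last paragraph: the construction cannot be completed by thinning or ``fusion''. The family $\mathcal F$ you build records, at rank $m$, the cylinders $[\beta_n\restrict(m+1)]$ for all $n\le b(m)$, where $\beta_n$ encodes what \emph{some} condition in $P_n$ forces about $\dot r$. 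This carries essentially no information when the centered pieces are small. Concretely, take $P$ to be Cohen forcing on $\Omega$ (conditions are finite sequences in $\prod_{i<n}G_i$), which is countable and hence $\sigma$-centered with singleton pieces $P_j=\{s_j\}$; then $\beta_j=s_j$, and $b(m)$ is by definition large enough that \emph{every} $t\in\prod_{i\le m}G_i$ occurs as $\beta_n\restrict(m+1)$ for some $n\le b(m)$. So at each rank $m$ your sets cover all of $\Omega$, the total mass at rank $m$ is $N_0\cdots N_m/(M_0\cdots M_m)\to\infty$, and $A_{\mathcal F}=\Omega$ no matter which subset of ranks you keep. The obstacle is therefore not that the sum might diverge for fast-growing $b$; it is that the set you are aiming $\dot r$ into need not be $\Nho$-null at all.

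The missing idea is to use the \emph{negative} information carried by a centered piece rather than the positive information: for $p_0\in P_m$ the paper considers
\[
S^n=\Bigl\{\,s\in\prod_{i<n}G_i\ :\ \forall p\in P_m\ \exists q\le p\ \bigl(q\forces\dot x\restrict n=s\bigr)\Bigr\},
\]
which is nonempty because otherwise finitely many members of the centered set $P_m$ would have a common extension forcing $\dot x\restrict n$ to avoid every value. K\"onig's lemma then gives a single branch $y\in\Omega$ lying in a countable elementary submodel $N$ containing $\dot x$ and $\seq{P_m:m\in\omega}$. Pairing this with an $f\in D$ whose null set $A_f$ captures every real of $N$ (rather than with your function $b$), one gets $y\in f(k)$ for infinitely many $k$, and since each $f(k)$ is open and $y\restrict\ell\in S^\ell$, every $p\in P_m$ has an extension forcing $\dot x\in f(k)$; a density argument then yields $P\forces\dot x\in A_f$ with $A_f$ a single $\Nho$-null set coded in the ground model. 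Both ingredients --- the ``unavoidable values'' tree $S^n$ and the elementary submodel with a dominating $f$ --- are absent from your outline, and the Cohen example shows they cannot be dispensed with.
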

	\begin{proof}
		This proof is based on the standard proof that $\sigma$-center forcings do not add the (usual) random real.
		
		We define a Polish relational system $R$ such that $\frakb(R) = \cov(\Nho)$ and every $\sigma$-centered forcing has goodness for the relation $R$.
		
		First we define
		$$
		D = \{ f \colon \omega \to \StdSets : \sum_n h(\diam(f(n))) \le 1 \},
		$$
		Here, $\StdSets$ is the set of all standard sets.
		For $f \in D$ and $x \in \Omega$, we define
		$$
		f \sqsubseteq^{\Nho}_n x \iff \forall k \ge n\ x \not \in f(k).
		$$
		Also, we define $\sqsubseteq^{\Nho} = \bigcup_{n \in \omega} \sqsubseteq^{\Nho}_n$.
		Therefore, putting
		$$
		A_f = \{ x \in \Omega : \exists^\infty n \ x \in f(n) \},
		$$
		we have the equivalece
		$$
		f \sqsubseteq^{\Nho} x \iff x \not \in A_f.
		$$
		Let $R = (D, \Omega, \sqsubseteq^{\Nho})$. It is clear that $R$ is a Polish relational system and $\frakb(R) = \cov(\Nho)$.
		
		So in the remaining part, we show  every $\sigma$-centered forcing has goodness for $R$.
		Let $P$ be a $\sigma$-centered forcing and $P = \bigcup_{m \in \omega} P_m$, where each $P_m$ is a centered subset of $P$.
		Let $\dot{x}$ be a $P$-name such that $P \forces \dot{x} \in \Omega$.
		Let $N$ be a countable elementary substructure of a sufficient large model containing $\dot{x}$ and $\seq{P_m : m \in \omega}$.
		Also let $f \in D$ be a $\sqsubseteq^{\Nho}$-dominating element over $N$. We have to show $P \forces \dot{x} \in A_f$.
		Let $p_0 \in P$ and take $m$ such that $p_0 \in P_m$.
		For each $n \in \omega$, we put
		$$
		S^n = \{ s \in \prod_{i < n} G_i : \forall p \in P_m\ \exists q \le p\ q \forces \dot{x} \restrict n = s \}.
		$$
		We claim here that each $S_n$ is nonempty.
		If $S_n$ were empty, then for every $s \in \prod_{i < n} G_i $, there would exist $p_s \in P_m$ forcing $\dot{x} \restrict n \ne s$.
		Taking a common extension of $\{ p_s : s \in \prod_{i < n} G_i  \}$ would give a contradiction.
		
		Therefore, $\bigcup_n S_n$ is an infinite subtree of a compact tree $\bigcup_n \prod_{i < n} G_i $.
		Thus, by K\"{o}nig's lemma, there is $y \in \Omega$ such that $y \restrict n \in S_n$ for every $n$.
		This $y$ can be taken so that $y \in N$. Therefore, it holds that $y \in A_f$.
		Fix a natural number $k$ such that $y \in f(k)$.
		Since $f(k)$ is open, we can take $\ell$ such that $[y\restrict\ell] \subset f(k)$.
		Since $y \restrict \ell \in S^\ell$, we can take $q \le p$ such that $\dot{x} \restrict \ell = y \restrict \ell$. Therefore $q \forces \dot{x} \in f(k)$.
		Since $k$ is an arbitrary number such that $y \in f(k)$ and there exist infinitely many such $k$, we have $q \forces \dot{x} \in A_f$.
		Since the condition $p$ were arbitrary, we have $P \forces \dot{x} \in A_f$.
	\end{proof}
	
	\begin{cor}
		It is consistent that $\cov(\Nho) < \add(\meager)$.
		Also i is consistent that $\cof(\meager) < \non(\Nho)$.
	\end{cor}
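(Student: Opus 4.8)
The plan is to produce two forcing extensions, one for each inequality, both leaning on the theorem just proved that a finite support iteration of $\sigma$-centered forcings adds no $\Nho$-random real, but in different ways.

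For $\cov(\Nho)<\add(\meager)$ I would work over a ground model $V\models\CH$ and pass to the Hechler model: let $P_{\omega_2}$ be the finite support iteration of Hechler (dominating) forcing of length $\omega_2$. Hechler forcing is $\sigma$-centered, so $P_{\omega_2}$ is covered by the theorem and adds no $\Nho$-random real over $V$; unwinding the proof of the theorem, this says that for every $P_{\omega_2}$-name $\dot x$ for a point of $\Omega$ there is $f\in D\cap V$ with $P_{\omega_2}\forces\dot x\in A_f$, where $A_f=\{x\in\Omega:\exists^\infty n\ x\in f(n)\}$ is $\Nho$-null, this last fact being witnessed by the tails $\seq{f(n):n\ge m}$ whose total $h$-mass tends to $0$, a computation absolute between $V$ and $V^{P_{\omega_2}}$. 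Since $V\models\CH$, the family $\{A_f:f\in D\cap V\}$ has size $\aleph_1$ and covers $\Omega$ in $V^{P_{\omega_2}}$, so $\cov(\Nho)=\aleph_1$ there (and it is $\ge\aleph_1$ since $\Haus^h_\Omega(\Omega)=\infty$). On the other hand, it is classical that the Hechler model satisfies $\add(\meager)=\aleph_2$: the cofinally added dominating reals give $\frakb=\aleph_2$, the Cohen reals added at limit stages of countable cofinality give $\cov(\meager)=\aleph_2$, and $\add(\meager)=\min(\frakb,\cov(\meager))$. Hence $\aleph_1=\cov(\Nho)<\add(\meager)=\aleph_2$ in $V^{P_{\omega_2}}$.

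For $\cof(\meager)<\non(\Nho)$ I would argue dually, reusing the model constructed earlier for the consistency of $\cov(\scrE)<\non(\Nho)$: the countable support iteration of the Klausner--Mejia forcings $Q_{c,H}^d$ of length $\omega_2$ over $V\models\CH$, with the parameters fixed there. In that model $\non(\Nho)=\aleph_2$, because the iteration forces $\frakv^\exists_{c,H}=\aleph_2$ and $\frakv^\exists_{c,H}\le\non(\Nho)$ by Lemma~\ref{lem:vexistschnonnho}. It then remains to check $\cof(\meager)=\aleph_1$ in that model; since $\cof(\meager)=\max(\frakd,\non(\meager))$, it is enough to see that $Q_{c,H}^d$ and its countable support iteration add no dominating real and keep the ground model reals non-meager, so $\frakd=\non(\meager)=\aleph_1$. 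Granting this we get $\cof(\meager)=\aleph_1<\aleph_2=\non(\Nho)$. A self-contained alternative, avoiding the Klausner--Mejia machinery, would be to run the countable support iteration of a proper, $\omega^\omega$-bounding, non-meager-preserving amoeba-type forcing for $\Nho$ that generically adds some $f\in D$ with $\Omega\cap V\subseteq A_f$, thereby raising $\add(\Nho)$ and hence $\non(\Nho)$ to $\aleph_2$ while leaving $\frakd$ and $\non(\meager)$ at $\aleph_1$.

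The first part is essentially bookkeeping once the theorem is granted, so the real obstacle is in the second: one must be sure that whatever forcing pushes $\non(\Nho)$ up to $\aleph_2$ can be taken $\omega^\omega$-bounding and non-meager-preserving, so that $\cof(\meager)$ is not dragged up along with it. For the Klausner--Mejia iteration this amounts to invoking the appropriate preservation properties of $Q_{c,H}^d$ (it should be $\omega^\omega$-bounding and have a localization property strong enough for Shelah's preservation theorem to carry non-meagerness through the iteration); if one prefers the amoeba route, the crux is instead to construct that bounding, category-preserving amoeba for $\Nho$ and verify its properness and the required preservation directly.
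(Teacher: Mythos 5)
Your first half coincides with the paper's proof: the paper also takes the finite support Hechler iteration of length $\omega_2$, gets $\cov(\Nho)=\aleph_1$ from the theorem that such iterations add no $\Nho$-random real (together with $\CH$ in the ground model), and gets $\add(\meager)=\aleph_2$ from the standard analysis of the Hechler model. That part is fine.

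The second half has a genuine gap. You propose to reuse the Klausner--Mejia model (or an unspecified amoeba-type forcing) and you correctly identify that everything hinges on showing $\cof(\meager)=\max(\frakd,\non(\meager))=\aleph_1$ there --- but you do not close this. Nothing in the paper supplies it: Theorem \ref{thm:km} only gives $(a,e)$-bounding on the bounded space $\prod a$ (which is not the same as $\omega^\omega$-bounding) and says nothing about preservation of non-meager sets through the countable support iteration; and the only category-type information available in that model is $\cov(\meager)\le\cov(\scrE)=\aleph_1$, which bounds the wrong side of $\cof(\meager)$. So as written the second inequality is not proved. The paper's route is much lighter and is the dual of your first half: take the finite support Hechler iteration of length $\omega_2+\omega_1$. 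Since $\cf(\omega_2+\omega_1)=\omega_1$, the last $\omega_1$ Hechler/Cohen reals give $\frakd=\non(\meager)=\aleph_1$, hence $\cof(\meager)=\aleph_1$, while the goodness for the Polish relational system $R=(D,\Omega,\sqsubseteq^{\Nho})$ established in the preceding theorem dualizes (by the standard preservation theory for finite support iterations of good ccc posets) to force $\frakd(R)=\non(\Nho)=\aleph_2$. You should either switch to this argument or supply the missing preservation facts for $Q^d_{c,H}$.
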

	\begin{proof}
		Consider finite support iterations of Hechler forcing of length $\omega_2$ and length $\omega_2 + \omega_1$, respectively.
	\end{proof}

	\section{Questions}

	The following questions remain.

	\begin{question}
		\begin{enumerate}
			\item Do we have $\Con(\add(\nul) < \add(\Nho))$ and $\Con(\cof(\Nho) < \cof(\nul))$?
			\item Do we have $ \add(\Nho) \le \frakb$ and $\frakd \le \cof(\Nho)$?
			\item Do we have $\Con(\add(\Nho) < \cov(\Nho))$ and $\Con(\non(\Nho) < \cof(\Nho))$?	
			\item \label{item:nonNhocovNho} Do we have $\Con(\non(\Nho) < \cov(\Nho))$?
		\end{enumerate}
	\end{question}
	
	\nocite{*}
	\printbibliography
\end{document}